\begin{document}

\newtheorem{theorem}{Theorem}[section]

\newtheorem{corollary}[theorem]{Corollary}
\newtheorem{definition}[theorem]{Definition}
\newtheorem{conjecture}[theorem]{Conjecture}
\newtheorem{question}[theorem]{Question}
\newtheorem{lemma}[theorem]{Lemma}
\newtheorem{proposition}[theorem]{Proposition}
\newtheorem{example}[theorem]{Example}
\newtheorem{problem}[theorem]{Problem}
\newenvironment{proof}{\noindent {\bf
Proof.}}{\rule{3mm}{3mm}\par\medskip}
\newcommand{\remark}{\medskip\par\noindent {\bf Remark.~~}}
\newcommand{\pp}{{\it p.}}
\newcommand{\de}{\em}
 \newcommand{\LMA}{{\it Linear and Multilinear Algebra},  }
\newcommand{\JEC}{{\it Europ. J. Combinatorics},  }
\newcommand{\JCTB}{{\it J. Combin. Theory Ser. B.}, }
\newcommand{\JCT}{{\it J. Combin. Theory}, }
\newcommand{\JGT}{{\it J. Graph Theory}, }
\newcommand{\ComHung}{{\it Combinatorica}, }
\newcommand{\DM}{{\it Discrete Math.}, }
\newcommand{\ARS}{{\it Ars Combin.}, }
\newcommand{\SIAMDM}{{\it SIAM J. Discrete Math.}, }
\newcommand{\SIAMADM}{{\it SIAM J. Algebraic Discrete Methods}, }
\newcommand{\SIAMC}{{\it SIAM J. Comput.}, }
\newcommand{\ConAMS}{{\it Contemp. Math. AMS}, }
\newcommand{\TransAMS}{{\it Trans. Amer. Math. Soc.}, }
\newcommand{\AnDM}{{\it Ann. Discrete Math.}, }
\newcommand{\NBS}{{\it J. Res. Nat. Bur. Standards} {\rm B}, }
\newcommand{\ConNum}{{\it Congr. Numer.}, }
\newcommand{\CJM}{{\it Canad. J. Math.}, }
\newcommand{\JLMS}{{\it J. London Math. Soc.}, }
\newcommand{\PLMS}{{\it Proc. London Math. Soc.}, }
\newcommand{\PAMS}{{\it Proc. Amer. Math. Soc.}, }
\newcommand{\JCMCC}{{\it J. Combin. Math. Combin. Comput.}, }
\newcommand{\GC}{{\it Graphs Combin.}, }

\title{ The Minimum Spectral Radius of Graphs with  the  Independence Number
\thanks{ This work is supported by National Natural Science
Foundation of China (No: 11271256).
}}
\author{   Ya-Lei Jin  and Xiao-Dong Zhang \\
{\small Department of Mathematics, and MOE-LSC,}\\
{\small Shanghai Jiao Tong University} \\
{\small  800 Dongchuan road, Shanghai, 200240, P.R. China}\\
{\small Email:  xiaodong@sjtu.edu.cn}
 }
\date{}
\maketitle
 \begin{abstract}
 In this paper,  we investigate  some properties of the Perron vector of connected graphs. These results are used to characterize that   all extremal connected graphs
    with having the minimum (maximum) spectra radius among all connected graphs of order $n=k\alpha$ with the independence number $\alpha$, respectively.
    
 \end{abstract}

{{\bf Key words:} Perron vector;
Independent number; Graph; Spectra radius.
 }

      {{\bf MSC:} 05C50, 05C35}
\vskip 0.5cm

\section{Introduction}
Throughout this paper,  we always consider  simple graphs.
  Let $G = (V(G),~E(G))$ be a simple graph with vertex set
$V(G)=\{v_1, \dots, v_n\}$ and edge set $E(G)$.  Let $A(G)=(a_{ij})$
be the $(0,1)$ {\it  adjacency matrix} of $G$ with $a_{ij}=1$ for
$v_i\sim v_j$ and $0$ otherwise, where $``\sim"$ stands for the adjacency relation.  
The largest eigenvalue of $A(G)$ is called {\it spectra radius} of $G$, denoted by $\lambda(G)$. The {\it independent number} (also the stability number) of $G$, denoted by $\alpha(G)$, is the size of the subset of $V(G)$, such that every pair vertices of this subset are not adjacent.

A classical Tur\'{a}n \cite{Turan1941} theorem for the independence number stated that the Tur\'{a}n graph $T_{n,\alpha}$ which consists of $\alpha$ disjoint balanced cliques  is a unique graph having the minimum size among all graphs  of order $n$ and the independence number $\alpha$.
  Since the Tur\'{a}n graph is disconnected, Ore \cite{Ore1962} raised how  to determine the minimum  number of edges of all connected graphs with order $n$ and independence number $\alpha$.  Recently, this problem was settled independently by Bougard and Joret \cite{boug2008}, and by Gitler and Valencia \cite{Git}.
  In spectral extremal graph theory, Guiduli \cite{guiduli1998}  and Nikiforov \cite{nikiforov2007}  independently proved a spectral extremal Tur\'{a}n theorem.
  Hence the Tur\'{a}n graph is a unique graph having the minimum spectral radius among all graphs of order $n$ and the independence number $\alpha$.  Moreover,
  Stevanovi\'{c} and Hansen \cite{Stevanovic2008} determined all extremal graphs with minimum spectral radius among all connected graphs of order $n$ and the clique number $\omega$.  It is natural to raise the following problem.
  \begin{problem}\label{prob1}
  Determine the minimum (maximum) spectral radius of matrices (for example, the adjacency, Laplacian, Signless, Distance matrices, etc) associated with a  connected graph of order $n$ and the independence number $\alpha$. Moreover, characterize all extremal graphs which attain the bound
   \end{problem}
   Recently, Xu et al.\cite{xu2009} characterized all extremal graphs with minimum spectral radius among all connected graphs of order $n$ and independence number $\alpha \in \{1,2,\lceil\frac{n}{2}\rceil,\lceil\frac{n}{2}\rceil+1,n-3,n-2,n-1\}$.  Du and Shi \cite{du2012}    proposed the following conjecture
    \begin{conjecture}\label{conjecture1}
    The graph obtained from a path of order $\alpha$ by blowing up each vertex to a clique of order $k$ minimizes the spectral radius among all connected graphs of $k\alpha$ with independence number $\alpha$.
    \end{conjecture}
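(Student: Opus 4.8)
The plan is to prove the conjecture in the following sharp form: among all connected graphs $G$ with $|V(G)|=k\alpha$ and $\alpha(G)=\alpha$, the spectral radius $\lambda(G)$ is minimized \emph{uniquely} by the graph $G^{*}$ obtained from a path $v_1v_2\cdots v_\alpha$ by replacing each $v_i$ with a clique $C_i\cong K_k$ and, for each path-edge $v_iv_{i+1}$, inserting a single bridge between one vertex of $C_i$ and one of $C_{i+1}$. Throughout I fix an extremal graph $G$ and its Perron vector $x>0$, normalized by $\max_i x_i=1$, so that $A(G)x=\lambda x$ with $\lambda=\lambda(G)$. The baseline ingredient is the bound $\lambda(G)\ge k-1$: the Caro--Wei inequality $\alpha(G)\ge\sum_i(d_i+1)^{-1}$ together with convexity gives $n/(\bar d+1)\le\alpha=n/k$, hence $\bar d\ge k-1$ and $\lambda\ge\bar d\ge k-1$, with equality forcing a $(k-1)$-regular disjoint union of copies of $K_k$; since that graph is disconnected, every admissible $G$ has $\lambda(G)>k-1$. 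All the real work is therefore structural.

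First I would set up two Perron-vector transformation lemmas, both derived from $\lambda(G')-\lambda(G)\ge x^{\top}(A(G')-A(G))x/(x^{\top}x)$ combined with the strict Perron--Frobenius inequality. A \emph{rotation lemma}: if $x_u\le x_w$ and we delete $uz\in E$ and add $wz\notin E$ without changing $\alpha(G)$, then $\lambda$ does not increase, and strictly decreases unless the entries are forced equal by $A(G)x=\lambda x$. A \emph{clique-completion lemma}: adding an edge inside a set that already lies in no independent set cannot raise $\alpha(G)$, and strictly raises $\lambda$. Using these I would prove the core structural claim, that $V(G)$ partitions into $\alpha$ cliques each of size exactly $k$. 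Starting from a maximum independent set $I=\{u_1,\dots,u_\alpha\}$, I assign the remaining $(k-1)\alpha$ vertices to the $u_i$ into classes of size $k$; the counting $n=k\alpha$ with $\alpha(G)=\alpha$, fed into a defect version of Caro--Wei, shows that in the minimizer each class must induce $K_k$ and no independent set meets two classes twice — for otherwise either $\alpha(G)>\alpha$ or one of the two lemmas yields a strict decrease of $\lambda$. Contracting each clique $C_i$ then exhibits $G$ as a blow-up of a connected graph $H$ on $\alpha$ vertices.

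It remains to pin down $H$ and the inter-clique edges. (a) Between two adjacent cliques exactly one bridge survives: two or more edges could have one deleted while keeping connectivity and not raising $\alpha$, which strictly lowers $\lambda$ by the lemmas; moreover the rotation lemma forces each surviving bridge to attach at the vertices of least Perron weight in its two cliques. (b) The quotient $H$ is the path $P_\alpha$: if $H$ contained a vertex of degree $\ge 3$ or a cycle, a Kelmans-type rotation that detaches a pendant clique-chain from a high-degree vertex and reattaches it at the end of the current longest chain — guided by the Perron weights so that mass migrates toward a single extremal vertex — strictly decreases $\lambda$, and iterating drives $H$ to $P_\alpha$, in parallel with the classical fact that lengthening transformations decrease the spectral radius of trees. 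Combining (a) and (b) gives $G\cong G^{*}$, and the strictness built into the lemmas delivers uniqueness.

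The step I expect to be the main obstacle is the core partition: converting the purely combinatorial data $\alpha(G)=\alpha$, $n=k\alpha$ into the rigid conclusion that the spectral minimizer is a disjoint union of $\alpha$ cliques $K_k$. Caro--Wei counting alone does not force this for general graphs, so it must be coupled with the edge-moving lemmas in a way that makes every deviation from the clique partition admit a \emph{simultaneously} $\alpha$-preserving and $\lambda$-decreasing modification. Keeping the "$\alpha$-preserving" bookkeeping compatible with the spectral decrease — in particular controlling how candidate bridges interact with alternative independent sets spanning several classes — is the delicate point; to keep it tractable I would run an induction on $\alpha$, peeling off an end clique of $G^{*}$ and using eigenvalue interlacing to compare $G$ with the one-clique-shorter extremal graph.
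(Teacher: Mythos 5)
Your outline overclaims: the paper itself does not prove Conjecture 1.2 in full, but only under the hypothesis $k>\frac{17\alpha+15}{8}$ (Theorem 1.3), and the place where that hypothesis is consumed is exactly the step you yourself flag as ``the main obstacle,'' namely forcing the minimizer to decompose into $\alpha$ vertex-disjoint copies of $K_k$. The paper gets this (Lemma 3.1) by a Tur\'an/Erd\H{o}s--Stone type count on the complement: if $\chi(G^c)\ge\alpha+1$ then $|E(G^c)|\le\frac{(\alpha-1)n^2}{2\alpha}-\frac{n}{2\alpha}+\frac{17}{16}-\frac{1}{8\alpha}$, hence $|E(G)|$ is large, hence $\lambda(G)\ge 2|E(G)|/n$ exceeds the benchmark $k-1+\frac{2}{k-1}$ furnished by the clique path --- and this numerical comparison is precisely where $k>\frac{17\alpha+15}{8}$ is needed. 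Your substitute --- a ``defect version of Caro--Wei'' coupled with unspecified $\alpha$-preserving, $\lambda$-decreasing local switches --- is not an argument: you would need to exhibit, for \emph{every} graph whose complement is not $\alpha$-chromatic, a modification that simultaneously preserves connectivity, does not change $\alpha$, and strictly lowers $\lambda$, and no such local mechanism is known (edge deletion can raise $\alpha$, and edge rotation guided by Perron weights gives no control on $\alpha$ in the absence of the clique partition you are trying to establish). The concluding suggestion of induction on $\alpha$ via interlacing has the same problem: removing an end clique from a candidate minimizer does not control the independence number of what remains, so the inductive hypothesis does not apply.

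The parts of your plan that do survive are essentially the paper's Section 3: your baseline $\lambda(G)\ge k-1$ via Caro--Wei plus $\lambda\ge\bar d$ is a clean alternative to the paper's Lemma 3.4 (which reuses the Tur\'an count), and your steps (a) and (b) --- one bridge per tree-edge by monotonicity of $\lambda$ under edge addition, then path-shaped quotient by Kelmans-type rotations in the direction that makes the clique tree more path-like --- match the paper's Lemmas 3.5--3.8 and the case analysis in the proof of Theorem 1.3, where the comparison of Perron entries is justified by the closed-walk lemma (Lemma 3.2) and by explicit Rayleigh-quotient computations. But those lemmas all operate \emph{after} one knows $G\in\mathcal{T}_{n,\alpha}$; without a genuine proof of the clique-partition step your argument does not get off the ground, and as stated it would prove a theorem (the unconditional conjecture) that is still open.
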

They proved this conjecture is true for  $\alpha=3,4$.
Motivated by Conjecture~\ref{conjecture1} and the above results, we study some properties of extremal graphs having the minimum spectral radius. Before stating our main results, we need some notations.  Let $ \mathcal{G}_{n,\alpha}$ be the set of all connected graphs of order $n$ with independence number $\alpha$ and let $\mathcal{T}_{n,\alpha}$ be the set of all graphs of order $n$ obtained from a tree of order $\alpha$ by replacing each vertex to a clique of order ${\lfloor\frac{n}{\alpha}\rfloor}$ or $ {\lceil \frac{n}{\alpha}\rceil}$.
 A graph $G$ of order $n$ with independence number $\alpha$ and $n\ge 3\alpha$ is called {\it clique path} and denoted by $P_{n,\alpha}$, if $G$  is obtained from a path of order $\alpha$ by replacing each vertex to a clique of order ${\lfloor\frac{n}{\alpha}\rfloor}$ or $ {\lceil \frac{n}{\alpha}\rceil}$ such that there are $2\alpha-2$ cut vertices. A graph $G$  of order $n$ with independence number $\alpha$ is called {\it clique star} and denoted by $S(n,\alpha)$, if
   $G$  is obtained from star $K_{1,\alpha-1}$ by replacing each vertex to
   a clique of order ${\lfloor\frac{n}{\alpha}\rfloor}$ or $ {\lceil \frac{n}{\alpha}\rceil}$ such that there are exactly $\alpha$ cut vertices. If $n=k\alpha$, then there exists a unique clique path and a unique star in $ \mathcal{G}_{n,\alpha}$. But, if $n\neq k\alpha$,  clique paths and clique star in $ \mathcal{G}_{n,\alpha}$ are  not unique.
  Moreover,
  Let
$$\lambda_{n,\alpha}=\min\{\rho(G): G\ \mbox{is a connected graph of order}\ n \ \mbox{with independence number}\ \alpha \},$$
$${\Lambda}_{n,\alpha}=\max\{\rho(G): G\ \mbox{is a connected graph of order}\ n \ \mbox{with independence number}\ \alpha \}$$
 The main results of this paper are states as follows.
 \begin{theorem}\label{ess} Fixed $\alpha$. Then
$$\lim_{n\rightarrow\infty}\frac{\lambda_{n,\alpha}}{n}=\frac{1}{\alpha}.$$
\end{theorem}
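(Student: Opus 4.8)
The plan is to establish the result by sandwiching $\lambda_{n,\alpha}$ between two quantities, each asymptotic to $n/\alpha$, so that after dividing by $n$ both the $\liminf$ and the $\limsup$ equal $1/\alpha$.

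For the lower bound I would start from the elementary inequality $\lambda(G)\ge 2e(G)/n$, valid for every graph $G$ of order $n$ because $2e(G)/n$ is the Rayleigh quotient of $A(G)$ at the all-ones vector $\mathbf 1$ and the spectral radius dominates every Rayleigh quotient. It then suffices to bound $e(G)$ from below for $G\in\mathcal G_{n,\alpha}$. Since $\alpha(G)=\alpha$ means the complement $\overline G$ has no clique on $\alpha+1$ vertices, Tur\'an's theorem \cite{Turan1941} gives $e(\overline G)\le e(T(n,\alpha))$, equivalently $e(G)\ge e(T_{n,\alpha})$, where $T_{n,\alpha}$ is the disjoint union of $\alpha$ balanced cliques recalled in the Introduction. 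Writing the clique sizes as $s_1,\dots,s_\alpha\in\{\lfloor n/\alpha\rfloor,\lceil n/\alpha\rceil\}$, each vertex of $T_{n,\alpha}$ has degree $s_i-1\ge \lfloor n/\alpha\rfloor-1$, so its average degree, and hence $2e(G)/n$, is at least $n/\alpha-2$. This proves $\lambda_{n,\alpha}\ge n/\alpha-2$ for all $n$, whence $\liminf_{n\to\infty}\lambda_{n,\alpha}/n\ge 1/\alpha$.

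For the upper bound I would produce one connected witness in $\mathcal G_{n,\alpha}$ with spectral radius $n/\alpha+O(1)$, for which the clique path $P_{n,\alpha}$ (defined once $n\ge 3\alpha$, the remaining finitely many $n$ being irrelevant to the limit) is the natural choice. By construction $P_{n,\alpha}$ is connected with independence number $\alpha$: an independent set meets each of its $\alpha$ cliques in at most one vertex, and selecting one non-cut vertex from every clique realizes the value $\alpha$. Crucially, the $\alpha$ cliques are joined along a path by $\alpha-1$ bridges rather than by complete joins, so every vertex sits in a clique of order at most $\lceil n/\alpha\rceil$ and carries at most a bounded number of extra bridge edges; thus $\Delta(P_{n,\alpha})\le \lceil n/\alpha\rceil+1$. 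The standard bound $\lambda(G)\le\Delta(G)$ now yields $\lambda_{n,\alpha}\le\lambda(P_{n,\alpha})\le\lceil n/\alpha\rceil+1$, so $\limsup_{n\to\infty}\lambda_{n,\alpha}/n\le 1/\alpha$, and combining the two estimates gives the theorem.

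Both analytic inputs ($\lambda\ge$ average degree and $\lambda\le\Delta$) are classical, and Tur\'an's theorem is already quoted, so I do not expect a deep obstacle; the only genuine care is structural. The point that must not be mishandled is the way the $\alpha$ cliques are connected: using bridges keeps $\Delta$ at $n/\alpha+O(1)$ and simultaneously keeps the independence number equal to $\alpha$, which is exactly what lets the crude estimate $\lambda\le\Delta$ match the lower bound. Since both inequalities are loose only by an additive $O(1)$, the slack disappears after division by $n$, and no sharper spectral computation on $P_{n,\alpha}$ is needed.
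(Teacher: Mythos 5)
Your argument is correct, and it diverges from the paper's in a meaningful way on the lower bound. For the upper bound you and the paper use the same witness, the clique path $P_{n,\alpha}$; the paper estimates its spectral radius more sharply via row sums of the similar matrix $D^{-1}AD$ (Lemma~\ref{L1}, giving roughly $\frac{n}{\alpha}-1+\frac{2}{n/\alpha-1}$), while you settle for $\lambda\le\Delta\le\lceil n/\alpha\rceil+1$, which is cruder but entirely sufficient after dividing by $n$. The real difference is in the lower bound. The paper argues by contradiction along a subsequence: assuming $\underline{\lim}\,\lambda_{n,\alpha}/n<1/\alpha$, it uses $\lambda\ge 2m/n$ to force $G^c$ to have so many edges that, by the stability-type Theorem~\ref{T4}, $G^c$ must be $\alpha$-colorable; a color class of $G^c$ is then a clique of $G$ of order $\lceil n/\alpha\rceil$, whence $\lambda(G)\ge\lceil n/\alpha\rceil-1$, a contradiction. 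You instead apply Tur\'an's theorem directly to $G^c$ (which is $K_{\alpha+1}$-free precisely because $\alpha(G)=\alpha$) to get $e(G)\ge \frac{n(n-1)}{2}-e(T(n,\alpha))$, i.e.\ at least the size of the union of $\alpha$ balanced cliques, and then invoke $\lambda\ge 2e(G)/n\ge n/\alpha-2$. Your route is more elementary: it needs neither the subsequence/contradiction framework nor Theorem~\ref{T4}, and it yields the explicit uniform bound $\lambda_{n,\alpha}\ge n/\alpha-2$ valid for every $n$ (and even for disconnected graphs). What the paper's heavier route buys is structural information --- that any graph with small spectral radius actually contains a clique of order $\lceil n/\alpha\rceil$ --- which is the same mechanism it reuses in Lemma~\ref{Z} to pin down the extremal graphs; for the limit statement alone that machinery is not needed. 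Both proofs are sound.
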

  \begin{theorem}\label{T2} If $n=k\alpha$ and $k> \frac{17\alpha+15}{8}$, then
  $P_{n,\alpha}$ is the only graph having the minimum spectral radius in $\mathcal{G}_{n,\alpha}$. In other words, for  any $G\in \mathcal{G}_{n,\alpha}$, $\lambda(G)\ge \lambda(P_{n,\alpha})$ with equality if and only if $G$ is $P_{n,\alpha}$.
  \end{theorem}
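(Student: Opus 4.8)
The plan is to fix a minimizer $G^{*}\in\mathcal{G}_{n,\alpha}$ of the spectral radius (one exists since $\mathcal{G}_{n,\alpha}$ is finite), to set $\lambda^{*}=\lambda(G^{*})$, and to let $x>0$ be its Perron vector normalized by $\max_i x_i=1$. The first step is to control the target value $\lambda(P_{n,\alpha})$ from above. In $P_{n,\alpha}$ every vertex has degree $k-1$ except the $2\alpha-2$ bridge endpoints, which have degree $k$; hence $\Delta(P_{n,\alpha})=k$, and since $P_{n,\alpha}$ is connected and non-regular, $\lambda(P_{n,\alpha})<k$. Because $\lambda(K_{k+1})=k$, subgraph monotonicity shows that any graph containing a clique of order $k+1$ has spectral radius at least $k>\lambda(P_{n,\alpha})\ge\lambda^{*}$. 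Consequently $\omega(G^{*})\le k$ and $\lambda^{*}<k$.

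First I would establish the coarse structure of $G^{*}$: it is a tree of $\alpha$ pairwise-disjoint cliques of order $k$ joined by $\alpha-1$ bridges, that is, $G^{*}\in\mathcal{T}_{n,\alpha}$. The constraint $\alpha(G^{*})=\alpha$ gives clique cover number at least $\alpha$, and, since $\omega(G^{*})\le k$ and $n=k\alpha$, minimality of $\lambda^{*}$ should force a cover by exactly $\alpha$ disjoint cliques, each necessarily of order $k$. The mechanism is that any surplus edge---one neither internal to such a $K_{k}$ nor among the $\alpha-1$ bridges needed for connectivity---can be deleted or relocated by a Perron-vector move: comparing the Rayleigh quotients $x^{T}Ax$ of $G^{*}$ and of the modified graph, a move that sends an edge towards the endpoint of smaller Perron entry strictly decreases $x^{T}Ax$, hence $\lambda$, while connectivity and the value of $\alpha$ are preserved by construction.

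The delicate point, and what I expect to be the main obstacle, is that the independence-number constraint can block an otherwise beneficial local move, so one must quantify how much room there is. Here the hypothesis $k>\frac{17\alpha+15}{8}$ enters. Since $\lambda^{*}<k$ while the Caro--Wei bound forces average degree at least $k-1$, the Perron entries inside each clique are nearly equal, their pairwise deviations being controlled by the few incident bridges; the threshold is exactly what guarantees that these deviations are small enough that each clean-up move produces a strict decrease of $\lambda$ which the combinatorial $\alpha$-constraint cannot absorb. Making the estimate of $\lambda(P_{n,\alpha})$ and of the competing configurations tight enough to reach the stated constant is the technical heart of the argument; Theorem~\ref{ess}, which gives $\lambda^{*}=k-O(1)$, serves as a consistency check.

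Finally, with $G^{*}\in\mathcal{T}_{n,\alpha}$ I would prove that its tree of cliques is a path, which identifies $P_{n,\alpha}$ uniquely. This is the clique analogue of the classical fact that the path minimizes the spectral radius among all trees of a given order. If the clique tree is not a path it has a clique of tree-degree at least $3$; applying a Kelmans-type shifting operation that rewires a branch into a longer arm and evaluating the change of $\lambda$ through the Perron vector strictly lowers the spectral radius, and iterating straightens the clique tree into a path. Relocating each bridge endpoint towards the vertex of smallest Perron entry in its clique then forces distinct endpoints and hence exactly $2\alpha-2$ cut vertices, giving $P_{n,\alpha}$; tracking the strict inequality at every step yields that $P_{n,\alpha}$ is the unique minimizer.
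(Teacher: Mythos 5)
The overall two--stage plan --- first force the minimizer into $\mathcal{T}_{n,\alpha}$, then straighten the clique tree into a path by Perron--vector rewiring --- matches the paper's. But your first stage, which is where the hypothesis $k>\frac{17\alpha+15}{8}$ actually does its work, is not carried out. You assert that minimality ``should force a cover by exactly $\alpha$ disjoint cliques'' and that any surplus edge can be deleted or relocated by a Perron--vector move ``while connectivity and the value of $\alpha$ are preserved by construction.'' That is precisely the point that needs proof: deleting an edge of a minimizer can increase the independence number, so the modified graph may leave $\mathcal{G}_{n,\alpha}$, and you give no construction that avoids this. The paper's route (Lemma~\ref{Z}) is global rather than local: Lemma~\ref{L1} gives $\lambda(G^{*})\le k-1+\frac{2}{k-1}$, so $G^{*}$ is $K_{k+1}$-free; if the complement $G^{*c}$ had chromatic number at least $\alpha+1$, the Du--Shi edge bound (Theorem~\ref{T4}(2)) applied to $G^{*c}$ would force $|E(G^{*})|\ge \frac{(k-1)n}{2}+\frac{k}{2}-\frac{17}{16}+\frac{k}{8n}$, whence $\lambda(G^{*})\ge \frac{2|E(G^{*})|}{n}>k-1+\frac{2}{k-1}$ precisely when $k>\frac{17\alpha+15}{8}$ --- a contradiction. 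Hence $G^{*c}$ is $\alpha$-partite with all parts of size $k$, and $G^{*}$ is $\alpha$ disjoint copies of $K_{k}$ joined by a tree of bridges. Your attribution of the threshold to ``Perron entries inside each clique being nearly equal'' does not correspond to any step that is needed; without the chromatic-number counting argument (or a genuine substitute) the reduction to $\mathcal{T}_{n,\alpha}$ is unsupported.

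The second stage is also substantially underspecified. A ``Kelmans-type shifting operation that rewires a branch into a longer arm'' is in the right spirit of Lemmas~\ref{L2}, \ref{L5} and \ref{L6}, but proving that each such rewiring strictly decreases $\lambda$ requires comparing Perron entries at the two candidate attachment points; the paper does this with a closed-walk limit lemma (Lemma~\ref{T1}) and a lengthy case analysis (Lemma~\ref{L6}) that occupies most of Section~3, and the admissible moves are constrained by the need to keep both connectivity and the independence number fixed at $\alpha$. An appeal to the analogue for trees does not by itself deliver the strict inequality or the uniqueness of $P_{n,\alpha}$.
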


 \begin{theorem}\label{T3}
If $n=k\alpha$, then the clique star is the only graph having the maximum spectral radius in $\mathcal{G}_{n,\alpha}$. In other words, for  any $G\in \mathcal{G}_{n,\alpha}$, $\lambda(G)\le \lambda(S_{n,\alpha})$ with equality if and only if $G$ is $S_{n,\alpha}$.
\end{theorem}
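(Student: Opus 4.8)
The plan is to take a graph $G$ attaining the maximum $\Lambda_{n,\alpha}$ and reshape it, through a sequence of operations that strictly increase the spectral radius while keeping the graph connected and the independence number equal to $\alpha$, until it becomes the clique star $S_{n,\alpha}$; the Perron vector guides every step. First I would fix such an extremal $G$, write $\lambda=\lambda(G)$, and let $x=(x_v)_{v\in V(G)}$ be its positive Perron vector normalized by $\max_v x_v=1$. Everything is driven by the eigen-equation $\lambda x_u=\sum_{w\sim u}x_w$ at each vertex $u$, together with the Rayleigh quotient $\lambda=x^{\top}A(G)x/(x^{\top}x)$, which lets me read off the change in $\lambda$ caused by inserting or relocating an edge. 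I would also fix a maximum independent set $I=\{v_1,\dots,v_\alpha\}$ and classify the vertices of $G$ according to their position relative to $I$ and to the blocks (maximal pieces without a cut vertex) of $G$.

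The central tool is a pair of local moves. The first is edge addition: in a connected graph inserting a missing edge strictly increases $\lambda$, so in an extremal $G$ two non-adjacent vertices can remain unjoined only because joining them would create an independent set of size $\alpha+1$. The second is an edge-relocation move of Kelmans type: if $x_u\ge x_v$ and one deletes the edges from $v$ to $N(v)\setminus N[u]$ and re-attaches them at $u$, then $\lambda$ does not decrease, and increases strictly unless the two neighbourhoods were already nested. Using these moves I would argue that a vertex $u^{*}$ of maximum Perron weight acts as a hub to which every other block must be attached, and that the independence-number constraint forces each such attachment to pass through a single cut vertex. This is what converts the freedom allowed by edge addition into the star-of-cliques skeleton of $K_{1,\alpha-1}$.

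It then remains to pin down the block sizes. Since $n=k\alpha$ and each of the $\alpha$ blocks meets the maximum independent set $I$ in exactly one vertex, an averaging and exchange argument on the Perron weights should show that all blocks are cliques of the same order $k$: any imbalance could be repaired by shifting a vertex from a smaller clique to a larger one across the hub, which strictly raises $\lambda$ while preserving $\alpha$. Assembling the star attachment pattern, the count of exactly $\alpha$ cut vertices, and the equal clique sizes identifies $G$ with $S_{n,\alpha}$. Uniqueness is then automatic: any $G\neq S_{n,\alpha}$ would admit one of the strict moves above, contradicting the maximality of $\lambda(G)$.

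The step I expect to be the main obstacle is the simultaneous control of the two constraints during the relocation moves. Each time I concentrate edges at the hub to push $\lambda$ upward, I must certify that the independence number stays exactly $\alpha$: it must not increase (which would push $G$ out of $\mathcal{G}_{n,\alpha}$), and, more delicately, the reconfiguration must not be silently blocked by an independent set of size $\alpha$ that it would enlarge. Reconciling the Kelmans-type relocation with the requirement that $\alpha$ be preserved is where the real combinatorial bookkeeping lies, and it is precisely here that the exact shape of the extremum — the star of equal cliques with $\alpha$ cut vertices, rather than some denser or more clustered graph of the same independence number — has to be forced rather than merely guessed.
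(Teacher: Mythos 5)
Your plan reads the theorem literally, as a maximization over all of $\mathcal{G}_{n,\alpha}$, and that is where it breaks: over the full class the clique star is not the maximizer, so no sequence of $\lambda$-increasing moves can terminate there. Concretely, for $\alpha\ge 2$ and $k\ge 2$ the join $\overline{K_\alpha}\vee K_{n-\alpha}$ (that is, $K_n$ with the $\alpha(\alpha-1)/2$ edges inside a fixed $\alpha$-set removed) is connected, has independence number exactly $\alpha$, and has spectral radius at least its average degree $n-1-\alpha(\alpha-1)/n$, which far exceeds $\lambda(S_{n,\alpha})\le \Delta(S_{n,\alpha})=k+\alpha-2$. The paper's proof avoids this because it tacitly works inside the clique-tree class $\mathcal{T}_{n,\alpha}$: its Case 1/Case 2 dichotomy presupposes that $G$ is $\alpha$ cliques of order $k$ glued along a tree, and its two Perron-guided relocations (first forcing each clique to have a unique attachment vertex, then rerouting all external edges of one hub onto a neighbouring hub so as to contract the tree toward a star) do terminate at $S_{n,\alpha}$ within that class. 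Your hub picture is the right instinct for that restricted problem, but your write-up never performs the restriction, and without it the statement you are proving is false.

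Two of your individual steps also point in the wrong direction. Adding an edge can only (weakly) decrease the independence number, so the obstruction to edge addition in a maximizer is that $\alpha$ might drop below the prescribed value, not that an independent set of size $\alpha+1$ might appear; you have imported the bookkeeping from the minimization problem of Theorem~\ref{T2}. More seriously, your balancing step is self-defeating: if shifting a vertex from a smaller clique to a larger one across the hub strictly raises $\lambda$ while preserving $\alpha$, then the configuration with equal cliques is not a local maximum, and iterating your own move drives $G$ toward one large clique with small appendages, which is essentially the counterexample above. To force equal clique sizes you would need either the reverse inequality or an a priori structural reduction to balanced clique trees (the maximization analogue of what Lemma~\ref{Z} supplies on the minimization side), and neither is present in the proposal.
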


{\bf Remark} Theorem~1 may be regarded as a spectral form of the well-known Erd\H{o}s-Stone-Simonovits theorem~\cite{bondy2008}, while Theorem 2 proves that  Conjecture~\ref{conjecture1} is true under minor conditions. The rest of this paper is organized as follows. In Section 2, we present a proof of Theorem~\ref{ess} and relative results. In Section 3, we present proofs of Theorems~\ref{T2} and \ref{T3}.

 \section{A spectral Erd\H{o}s-Stone-Simonovits type theorem}

 In order to prove Theorem~\ref{ess}, we need the following lemmas.
 \begin{theorem}\cite{du2012}\label{T4}
(1). Every nonbipartite triangle free graph of order $n$ has at most $1+\frac{(n-1)^2}{4}$;\\
(2). If $G$ is a $K_{r+1}$-free graph of order $n$ with chromatic number at least $r+1>2$, then $|E(G)|\leq \frac{(r-1)n^2}{2r}-\frac{n}{2r}+\frac{17}{16}-\frac{1}{8r}$.
\end{theorem}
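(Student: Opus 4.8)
The plan is to treat the two parts separately, proving (1) by a direct count on a shortest odd cycle and then deriving (2) from (1) by induction on $r$.

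For part (1), let $G$ be triangle-free and nonbipartite. Being nonbipartite, $G$ contains an odd cycle, and being triangle-free its shortest odd cycle $C$ has length at least $5$; fix such a $C = v_1v_2v_3v_4v_5$, which is an induced $C_5$. The key observation is that $\alpha(C_5)=2$, so any vertex $u \notin C$ has at most two neighbours on $C$: three neighbours among the five vertices of $C_5$ would include two adjacent ones and create a triangle. Writing $W = V(G)\setminus C$ with $|W| = n-5$, I would then split the edges into three groups: the $5$ edges inside $C$, at most $2(n-5)$ edges between $C$ and $W$, and, since $G[W]$ is triangle-free, at most $\lfloor (n-5)^2/4\rfloor$ edges inside $W$ by Mantel's theorem. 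Summing gives $5 + 2(n-5) + \frac{(n-5)^2}{4} = \frac{n^2-2n+5}{4} = 1 + \frac{(n-1)^2}{4}$, which is exactly the claimed bound.

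For part (2) I would induct on $r$, the base case $r=2$ being precisely the nonbipartite triangle-free case of part (1) (whose bound is easily checked to be at most the claimed one for $r=2$ whenever $n\ge 1$). For the step, let $v$ be a vertex of maximum degree $\Delta$ and put $A = N(v)$, so that $G[A]$ is $K_r$-free. Summing degrees over $V(G)\setminus A$ and using maximality of $\Delta$ yields the basic estimate $|E(G)| \le |E(G[A])| + \Delta(n-\Delta)$. Now split on the chromatic number of the neighbourhood. If $\chi(G[A]) \ge r$, then $G[A]$ is a $K_r$-free graph of order $\Delta$ with chromatic number at least $r$, so the induction hypothesis bounds $|E(G[A])|$; substituting and maximising the resulting concave quadratic in $\Delta$ — with optimum near $\Delta = \frac{(r-1)n}{r} - \frac{1}{2r}$ — reproduces the right-hand side of (2) exactly, the constant $\frac{17}{16}$ and the term $-\frac{1}{8r}$ emerging cleanly from the telescoping of the correction terms. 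This case is thus tight and essentially mechanical once the basic estimate is in hand.

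The main obstacle is the complementary case $\chi(G[N(v)]) \le r-1$, in which $G[A]$ is $(r-1)$-colourable and only the ordinary Tur\'an bound $|E(G[A])| \le \frac{(r-2)\Delta^2}{2(r-1)}$ is available; plugging this in and optimising returns the full Tur\'an value $\frac{(r-1)n^2}{2r}$ and loses the correction entirely. This case genuinely occurs — already $C_5$ has every neighbourhood of chromatic number $1$ while $\chi(C_5)=3$ — so the global hypothesis $\chi(G)\ge r+1$ cannot be localised to any single neighbourhood and must be exploited structurally instead. Here I would argue that when every neighbourhood is $(r-1)$-colourable the non-$r$-colourability of $G$ still forces a $C_5$-type substructure within one layer, the extremal configuration being the join $K_{r-2}\vee H$ of a clique with the triangle-free nonbipartite extremal graph $H$ from part (1). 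Quantifying the edge deficiency this forces — in effect a stability argument that reduces the hard case back to part (1) to recover the exact constant — is the crux of the whole proof.
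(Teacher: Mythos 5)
The paper offers no proof of this statement at all: it is quoted from Du and Shi \cite{du2012} and used as a black box, so there is no internal argument to compare yours against; I can only assess your proposal on its own terms, and it has two gaps, one small and one serious.

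In part (1) your count is the standard one, but the opening step is wrong as written: a shortest odd cycle of a nonbipartite triangle-free graph has length \emph{at least} $5$, not exactly $5$, and such a graph need not contain any $C_5$ at all ($C_7$ is already a counterexample). You must carry out the argument for a shortest odd cycle $C$ of general length $2k+1\ge 5$: such a $C$ is induced, and a parity argument (if an outside vertex $u$ had $t\ge 3$ neighbours on $C$, the $t$ cycles formed by $u$ and the arcs of $C$ would all be shorter than $C$, hence even, while their lengths sum to the odd number $2k+1+2t$) shows each outside vertex has at most two neighbours on $C$. The resulting bound $(2k+1)+2(n-2k-1)+\frac{(n-2k-1)^2}{4}$ equals $1+\frac{(n-1)^2}{4}$ for $k=2$ and is strictly smaller for $k\ge 3$, so the conclusion survives, but this case analysis is missing from your write-up. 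In part (2), the branch of your induction with $\chi(G[N(v)])\ge r$ is correct and does land exactly on the stated right-hand side (I verified that maximising $-\frac{r}{2(r-1)}\Delta^2+\bigl(n-\frac{1}{2(r-1)}\bigr)\Delta+\frac{17}{16}-\frac{1}{8(r-1)}$ over $\Delta$ yields $\frac{(r-1)n^2}{2r}-\frac{n}{2r}+\frac{17}{16}-\frac{1}{8r}$). But you yourself identify the complementary case $\chi(G[N(v)])\le r-1$ as "the crux" and leave it as a plan ("forces a $C_5$-type substructure", "a stability argument") rather than a proof. Since that is precisely the case in which the global hypothesis $\chi(G)\ge r+1$ has to be exploited — and, as your $C_5$ example shows, it cannot be localised to a single neighbourhood — part (2) is not proved. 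The attempt is a reasonable skeleton, but it does not establish the theorem.
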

\begin{lemma}\label{L1}
If $n=k\alpha+t,\alpha>1$, then
\begin{displaymath}
\lambda_{n,\alpha} \leq \left\{ \begin{array}{ll}
k-1+\frac{2}{k-1}, & \textrm{ $t=0$}\\
k+\frac{2}{k}, & \textrm{$1\leq t<\alpha.$ }
\end{array} \right.
\end{displaymath}
\end{lemma}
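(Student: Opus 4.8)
Since $\lambda_{n,\alpha}$ is defined as a minimum over all connected graphs of order $n$ with independence number $\alpha$, to prove the stated \emph{upper} bound it suffices to exhibit one such graph whose spectral radius does not exceed the right-hand side. The natural candidate is the clique path $P_{n,\alpha}$ itself. First I would record that $P_{n,\alpha}$ is connected and has independence number exactly $\alpha$: its $\alpha$ cliques form a clique cover, so $\alpha(P_{n,\alpha}) \le \alpha$, while choosing in each clique a vertex incident to no bridge (possible because each clique has order $k \ge 3$ and carries at most two cut vertices) yields an independent set of size $\alpha$, so $\alpha(P_{n,\alpha}) = \alpha$. Thus $\lambda_{n,\alpha} \le \lambda(P_{n,\alpha})$, and the whole problem is to bound $\lambda(P_{n,\alpha})$ from above.

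For the case $t = 0$ (all cliques of order $k$) I would use the elementary bound $\lambda(A) \le \max_i (Ax)_i/x_i$, valid for any positive vector $x$ and any nonnegative matrix $A$, applied to a cleverly chosen $x$. Take $x$ piecewise constant: assign the value $a$ to every ordinary (non-cut) vertex and the value $b$ to each of the $2\alpha-2$ cut vertices. A short case analysis splits the vertices into four types — ordinary vertex of an internal clique, ordinary vertex of an end clique, cut vertex of an internal clique, cut vertex of an end clique — and for each the quantity $(Ax)_i/x_i$ becomes an explicit function of $k$ and the single ratio $r = b/a$. Writing $\lambda^{*} = k-1+\frac{2}{k-1}$, the conditions $(Ax)_i/x_i \le \lambda^{*}$ translate into one upper bound and two lower bounds on $r$; I would then check that the resulting interval for $r$ is nonempty, which reduces to verifying a pair of polynomial inequalities in $k$ (for instance $(k-1)^3 \le k(k^2-3k+4)$). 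Choosing any admissible $r$ gives $\lambda(P_{n,\alpha}) \le \lambda^{*}$, as required.

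For $1 \le t < \alpha$ the maximum clique order in $P_{n,\alpha}$ is $k+1$, and I would reduce to the previous case by monotonicity. Deleting one ordinary vertex from each of suitable $\alpha-t$ cliques of $P_{(k+1)\alpha,\alpha}$ produces a connected graph $G$ of order $k\alpha+t$ whose independence number is still $\alpha$ (the same transversal argument applies), and $G$ is an induced subgraph of $P_{(k+1)\alpha,\alpha}$. Since the spectral radius is monotone under passing to induced subgraphs (Cauchy interlacing), $\lambda_{n,\alpha} \le \lambda(G) \le \lambda(P_{(k+1)\alpha,\alpha})$, and the $t=0$ bound with $k+1$ in place of $k$ gives $\lambda(P_{(k+1)\alpha,\alpha}) \le (k+1)-1+\frac{2}{(k+1)-1} = k+\frac{2}{k}$.

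The genuinely substantive step is the $t=0$ estimate: everything hinges on choosing the two test-vector values so that all four ratios $(Ax)_i/x_i$ sit below $k-1+\frac{2}{k-1}$ simultaneously, and it is precisely the balancing of the cut-vertex ratio against the ordinary-vertex ratio that forces the correction term $\frac{2}{k-1}$. The verification of connectivity, of the independence number, and of the interlacing reduction for $t \ge 1$ are all routine by comparison.
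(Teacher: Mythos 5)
Your proposal is correct and is essentially the paper's own argument: the paper bounds $\lambda(P_{n,\alpha})$ by the maximum row sum of $D^{-1}AD$, which is exactly your test-vector bound $\lambda\le\max_i (Ax)_i/x_i$ with the specific admissible choice $x_v=d_v$ (i.e.\ $r=b/a=k/(k-1)$, which indeed makes all four vertex-type ratios at most $k-1+\tfrac{2}{k-1}$), and for $1\le t<\alpha$ it uses the same subgraph-monotonicity reduction to $P_{(k+1)\alpha,\alpha}$.
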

\begin{proof}
  If $t=0,$  $P_{n,\alpha}$ is just as the following $Fig.1$.

\begin{picture}(0, 20)
\put(45,6){$K_k$}
\put(48,8){\oval(8,12)}
\put(52,8){\line(1,0){7}}
\put(52,8){\circle*{1}}
\put(59,8){\circle*{1}}
\put(45,-2){$V_1$}
\put(60,6){$K_k$}
\put(63,8){\oval(8,12)}
\put(67,8){\line(1,0){4}}
\put(67,8){\circle*{1}}
\put(60,-2){$V_2$}
\put(72,8){$\ldots$}
\put(80,6){$K_k$}
\put(83,8){\oval(8,12)}
\put(87,8){\line(1,0){7}}
\put(87,8){\circle*{1}}
\put(94,8){\circle*{1}}
\put(80,-2){$V_{\alpha-1}$}
\put(95,6){$K_k$}
\put(98,8){\oval(8,12)}
\put(95,-2){$V_{\alpha}$}
\put(68,-8){$G_1$}
\put(68,-14){$Fig.1$}
\end{picture}\\ \\  \\
Let $A(P_{n,\alpha}),D(P_{n,\alpha})$ be the adjacency matrix and degree diagonal matrix of $P_{n,\alpha}$. It is easy to see that $A(P_{n,\alpha})$ and $D(P_{n,\alpha})^{-1}A(P_{n,\alpha})D(P_{n,\alpha})$ have the same eigenvalues.
 If $v\in V_1$ or $V_\alpha$, then the sum of the row corresponding to $v$ in $D(P_{n,\alpha})^{-1}A(P_{n,\alpha})D(P_{n,\alpha})$ is at most $\max\{k-1+\frac{1}{k-1},\frac{k-1}{k}(k-1)+1\}=k-1+\frac{1}{k-1}$.
 If $v\in V_i,2\leq i\leq \alpha$, then the sum of the row corresponding to $v$ in the matrix $D(P_{n,\alpha})^{-1}A(P_{n,\alpha})D(P_{n,\alpha})$ is at most $\max\{k-1+\frac{2}{k-1},k-1+\frac{2}{k}\}=k-1+\frac{2}{k-1}$.
     Hence  $\lambda_{n,\alpha}<\lambda(P_{n,\alpha})<k-1+\frac{2}{k-1}$. If $1\le t<\alpha$, it is easy to see that $P_{n,\alpha}$ is a subgraph of $P_{n+\alpha-t,\alpha}$. Hence  $\lambda_{n,\alpha}\leq \lambda(P_{n,\alpha})<\lambda(P_{n+\alpha-t,\alpha})<k+\frac{2}{k}$, since $n+\alpha-t=(k+1)\alpha$. This completes the proof.
\end{proof}
 We are ready to prove Theorem~\ref{ess}.

\begin{proof} Firstly we show that the upper limit of $\frac{\lambda_{n,\alpha}}{n}$ is $\frac{1}{\alpha}$.  Considering the following two cases:\\
{\bf Case 1:} $n=k\alpha$. By the Lemma \ref{L1}, $\lambda_{n,\alpha}<k-1+\frac{2}{k-1}$, which implies  $$\overline{\lim_{n\rightarrow\infty} }\frac{\lambda_{n,\alpha}}{n}\leq \overline{\lim_{n\rightarrow\infty}}\frac{k-1+\frac{2}{k-1}}{n}=\frac{1}{\alpha}.$$
{\bf Case 2:}  $n= k\alpha+t, 0<t<\alpha$. By the Lemma \ref{L1}, $\lambda_{n,\alpha}<k+\frac{2}{k}$, which implies $$\overline{\lim_{n\rightarrow\infty} }\frac{\lambda_{n,\alpha}}{n}\leq \overline{\lim_{n\rightarrow\infty}}\frac{k+\frac{2}{k}}{n}=\frac{1}{\alpha}.$$
Next we will show that the lower limit of $\frac{\rho(n,\alpha)}{n}$ is also $\frac{1}{\alpha}$. Suppose $$\underline{\lim}_{n\rightarrow\infty}\frac{\rho(n,\alpha)}{n}=\frac{1}{\alpha}-2\epsilon,0<2\epsilon\leq \frac{1}{\alpha} .$$
Then there exists an increasing sequence $\{n_i\}_{i=1}^\infty$, and a sequence of graphs $\{G_i\}_{i=1}^\infty$, where $G_i$ is a graph of order  $n_i$ with size $m_i$ such that $\lambda(G_i)\leq  (\frac{1}{\alpha}-\epsilon)n_i$. Since $\lambda(G_i)\geq \frac{2m_i}{n_i}$,  we have
\begin{eqnarray*}
|E(G_i^c)|&\geq& \frac{n_i(n_i-1)}{2}-\frac{1}{2}(\frac{1}{\alpha}-\epsilon)n_i^2\\
&=&\frac{\alpha-1}{2\alpha}n_i^2+\frac{\epsilon n_i^2}{2}-\frac{n_i}{2}\\
&>&\frac{\alpha-1}{2\alpha}n_i^2-\frac{n_i}{2\alpha}+\frac{17}{16}-\frac{k}{8n_i},
\end{eqnarray*}
 for $i$ is large enough. By Lemma~\ref{T4}, the chromatic number of $G_i^c$ is at most $\alpha$ if $i$ is large enough. Then $G_i$ contains a clique of order $\lceil\frac{n}{\alpha}\rceil$, which imples $\lambda(G_i)>\lceil\frac{n_i}{\alpha}\rceil-1$. Hence
  $\frac{\lambda(G_i)}{n_i}\rightarrow \frac{1}{\alpha}$ as $n_i$ tends to infinity. It is a contradiction with $\alpha(G_i)\leq  (\frac{1}{\alpha}-\epsilon)n_i$ for all $i$. So $$\underline{\lim}_{n_i\rightarrow\infty}\frac{\rho(n,\alpha)}{n}=\frac{1}{\alpha}.$$
This completes the proof.
\end{proof}
 {\bf Remark} It follows from Theorem~\ref{ess} that a graph of order $n=k\alpha$ with spectral radius $\lambda(G)\le (\frac{1}{\alpha}-\epsilon)n$ for positive number $\varepsilon>0$ has an independent set with at least $\alpha$. It is an interesting question to count how many such independent sets? Denote by $i_s(G)$  the number of $s$-independent set of $G$ and $k_s(G)$ for the number of $s$-clique of $G$. It is easy to see that $k_s(G)=i_s(G^c)$.  Bollob\'{a}s and Nikiforov \cite{bela2007} gave a lower bound for $k_{r+1}(G)$ in terms of  spectral radius.
\begin{lemma}\cite{bela2007}\label{bv}
For any graph $G$ of order $n$, and $r>1$,
$$k_{r+1}(G)\geq \left(\frac{\lambda(G)}{n}-1+\frac{1}{r}\right)\frac{r(r-1)}{r+1}\left(\frac{n}{r}\right)^{r+1}.$$
\end{lemma}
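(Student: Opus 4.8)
The plan is to follow the Bollob\'{a}s--Nikiforov philosophy of transporting spectral information into clique counts through the Perron vector, and to read the inequality as a \emph{spectral supersaturation} of Tur\'{a}n's theorem. The guiding observation is that the bound is tight precisely at the balanced complete $r$-partite (Tur\'{a}n) graph: there the Perron vector is uniform, $\lambda/n=1-1/r$, so the bracket $\frac{\lambda}{n}-1+\frac{1}{r}$ vanishes, and indeed $k_{r+1}=0$. This pins down both the extremal configuration and the fact that the right-hand side is affine in $\lambda$, and it strongly suggests an induction on $r$ in which the case $r=1$ is vacuous (the factor $r-1$ kills the bound) and the first substantial case $r=2$ is a spectral lower bound on the number of triangles.

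Concretely, I would fix the Perron eigenvector $x=(x_1,\dots,x_n)\ge 0$ normalized by $\sum_i x_i^2=1$, so that $\lambda=x^{\top}A(G)x=2\sum_{\{u,v\}\in E(G)}x_ux_v$ and, at the vertex level, $\sum_{j\sim i}x_j=\lambda x_i$ for every $i$. This last identity is the engine of the argument: it says that the $x$-mass seen across the neighbourhood of $i$ is exactly $\lambda x_i$, and iterating it through $A^2,A^3,\dots$ expresses powers of $\lambda$ as weighted counts of walks, inside which the genuine clique walks are the terms I want to isolate. I would introduce weighted clique sums $\tau_s=\sum_{\{v_1,\dots,v_s\}\ \mathrm{clique}}x_{v_1}\cdots x_{v_s}$, note $\tau_2=\lambda/2$, and establish a three-term Moon--Moser / Kruskal--Katona type recursion relating $\tau_{s-1}$, $\tau_s$ and $\tau_{s+1}$. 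Such a recursion is obtained by fixing an $s$-clique, using the eigenvector identity to control the weighted number of common neighbours that extend it to an $(s+1)$-clique, and then averaging these local estimates over all $s$-cliques with a single application of Cauchy--Schwarz.

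Granting the recursion, the remaining steps are bookkeeping. The recursion is linear and affine in $\lambda$, so iterating it from the base value $\tau_2=\lambda/2$ up to level $r+1$ telescopes into a lower bound for the top weighted count in which the factor $\frac{\lambda}{n}-1+\frac{1}{r}$ appears, the subtracted constant being forced by matching the Tur\'{a}n graph at equality. Finally I would convert the weighted quantity back to the plain count $k_{r+1}$ by a Maclaurin/power-mean inequality for elementary symmetric functions of $x_1^2,\dots,x_n^2$ together with the normalization $\sum_i x_i^2=1$; this symmetric-function step is exactly what is responsible for the factor $\frac{r(r-1)}{r+1}\left(\frac{n}{r}\right)^{r+1}$, since replacing uneven weights by the uniform weight $1/\sqrt{n}$ is extremal and turns powers of the weights into powers of $n/r$.

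I expect the recursion step to be the main obstacle. The local estimate from $\sum_{j\sim i}x_j=\lambda x_i$ only controls a \emph{first moment} of the extension weights, whereas a clean per-clique lower bound on the weighted count of extending vertices requires a second-moment (convexity) input; the difficulty is to feed in this Cauchy--Schwarz/Jensen step without degrading the constant $\frac{r(r-1)}{r+1}$, and to discard the non-clique walk contributions that $A^{s}$ inevitably produces. Controlling these two losses simultaneously --- equivalently, proving that the balanced complete $r$-partite graph is the unique minimiser of $k_{r+1}$ at each fixed value of $\lambda$ --- is the crux; by contrast, the base case $r=2$ and the final symmetric-function conversion are routine once the recursion is in hand.
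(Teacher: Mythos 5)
You should first note that the paper does not prove this lemma at all: it is quoted verbatim, with citation, from Bollob\'{a}s and Nikiforov's \emph{Cliques and the spectral radius}, so there is no in-paper argument to compare against and the only question is whether your sketch would actually reconstruct a proof. It would not, for two reasons. First, the entire content of the lemma sits in the step you yourself flag as "the main obstacle": the three-term recursion among the Perron-weighted clique sums $\tau_{s-1},\tau_s,\tau_{s+1}$ is asserted to exist, with its constants reverse-engineered from the requirement that the Tur\'{a}n graph be extremal, but no mechanism is given for proving it. The eigenvector identity $\sum_{j\sim i}x_j=\lambda x_i$ controls only first moments of neighbourhood weights, as you note, and a single Cauchy--Schwarz does not isolate the clique contributions from the non-clique walks in $A^s$. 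In the actual Bollob\'{a}s--Nikiforov argument the recursion that does the work is the Moon--Moser inequality among the \emph{unweighted} counts, $k_{s+1}/k_s\ge\frac{1}{s^2-1}\bigl(s^2k_s/k_{s-1}-n\bigr)$, and the spectral information enters exactly once, through their separate inequality $\lambda^{r+1}\le\sum_{s=2}^{r+1}(s-1)k_s(G)\,\lambda^{r+1-s}$ (itself proved by a Motzkin--Straus-type manipulation of the Perron vector). There is no weighted Moon--Moser recursion to discover.

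Second, your closing "routine" step would fail even if the recursion were available. Maclaurin/power-mean inequalities bound the elementary symmetric functions $e_s(x_1^2,\dots,x_n^2)$ from \emph{above} by their value at the uniform point, so they can only give an upper bound on $\tau_{r+1}$ in terms of $n$; they cannot convert a lower bound on the weighted sum $\tau_{r+1}=\sum x_{v_1}\cdots x_{v_{r+1}}$ into a lower bound on the number $k_{r+1}$ of cliques. For that you would need each individual product to be small, i.e. $\max_i x_i=O(n^{-1/2})$, which holds for nearly regular graphs but fails in general (the Perron vector of a graph with a dominant vertex has a coordinate of constant order), and then a few heavy cliques could carry all of $\tau_{r+1}$. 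This is precisely why Bollob\'{a}s and Nikiforov run the supersaturation part of the argument on the unweighted $k_s$ directly. The observation that the bracket $\frac{\lambda}{n}-1+\frac{1}{r}$ vanishes on the balanced Tur\'{a}n graph is a useful consistency check, but it is not a substitute for either of these two missing steps.
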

By using the above Lemma, we present a lower bound for $i_s(G)$.
\begin{theorem}\label{innu}
Let  $G$  be a simple graph of order $n$ and $\alpha$ be an positive integer. If  $\lambda(G)\leq \frac{n}{\alpha}$, then
$$i_\alpha(G)\ge \left(\frac{1}{\alpha(\alpha-1)}-\frac{1}{n}\right)\frac{(\alpha-1)
(\alpha-2)}{\alpha}\left(\frac{n}{\alpha-1}\right)^\alpha.$$
\end{theorem}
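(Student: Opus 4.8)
The plan is to reduce the problem to counting cliques in the complement graph and then invoke the Bollob\'{a}s--Nikiforov bound (Lemma~\ref{bv}). As observed in the Remark preceding the statement, an $\alpha$-element set $S\subseteq V(G)$ is independent in $G$ if and only if it induces a clique in the complement $G^c$, so that $i_\alpha(G)=k_\alpha(G^c)$. Applying Lemma~\ref{bv} to $G^c$ (which also has order $n$) with $r+1=\alpha$, i.e.\ $r=\alpha-1>1$ (so we assume $\alpha\ge 3$, the cases $\alpha\le 2$ being trivial), gives
\[
i_\alpha(G)=k_\alpha(G^c)\ge\left(\frac{\lambda(G^c)}{n}-1+\frac{1}{\alpha-1}\right)\frac{(\alpha-1)(\alpha-2)}{\alpha}\left(\frac{n}{\alpha-1}\right)^\alpha.
\]
Since the trailing factor is positive, comparing with the target inequality reduces everything to showing $\frac{\lambda(G^c)}{n}-1+\frac{1}{\alpha-1}\ge\frac{1}{\alpha(\alpha-1)}-\frac{1}{n}$.

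The substantive step is therefore a lower bound on $\lambda(G^c)$ in terms of $\lambda(G)$, for which I would establish the standard complement relation $\lambda(G^c)\ge n-1-\lambda(G)$. Writing $A(G^c)=J-I-A(G)$, where $J$ is the all-ones matrix and $I$ the identity, and evaluating the Rayleigh quotient at the all-ones vector $\mathbf{1}$ gives $\lambda(G^c)\ge\frac{1}{n}\mathbf{1}^{T}A(G^c)\mathbf{1}=\frac{2|E(G^c)|}{n}$. On the other hand, the elementary bound $\lambda(G)\ge\frac{2|E(G)|}{n}$ yields $|E(G)|\le\frac{n\lambda(G)}{2}$, and since $|E(G^c)|=\binom{n}{2}-|E(G)|$ I obtain
\[
\lambda(G^c)\ge\frac{2}{n}\left(\binom{n}{2}-\frac{n\lambda(G)}{2}\right)=n-1-\lambda(G).
\]

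Finally I would insert the hypothesis $\lambda(G)\le\frac{n}{\alpha}$, which gives $\frac{\lambda(G^c)}{n}\ge 1-\frac{1}{n}-\frac{1}{\alpha}$, and hence
\[
\frac{\lambda(G^c)}{n}-1+\frac{1}{\alpha-1}\ge-\frac{1}{n}-\frac{1}{\alpha}+\frac{1}{\alpha-1}=\frac{1}{\alpha(\alpha-1)}-\frac{1}{n},
\]
using $\frac{1}{\alpha-1}-\frac{1}{\alpha}=\frac{1}{\alpha(\alpha-1)}$. Substituting this estimate into the displayed consequence of Lemma~\ref{bv} yields exactly the claimed bound. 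The only genuinely nontrivial ingredient is the complement spectral inequality $\lambda(G^c)\ge n-1-\lambda(G)$; the rest is the clique/independent-set duality together with routine arithmetic. I expect no real obstacle beyond checking that the multiplicative constant in Lemma~\ref{bv} is positive so the inequality direction is preserved, and keeping track of the requirement $r=\alpha-1>1$.
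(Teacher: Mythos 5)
Your proposal is correct and follows essentially the same route as the paper: both pass to $G^c$ via $i_\alpha(G)=k_\alpha(G^c)$, use $\lambda\ge 2m/n$ on $G$ and on $G^c$ to deduce $\lambda(G^c)\ge \frac{\alpha-1}{\alpha}n-1$ from $\lambda(G)\le\frac{n}{\alpha}$, and then apply the Bollob\'as--Nikiforov lemma with $r=\alpha-1$. Your explicit handling of the positivity of the multiplicative constant and of the requirement $r>1$ is a minor tidiness improvement over the paper's write-up, but the argument is the same.
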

\begin{proof}
Since $\frac{2m(G)}{n}\leq \lambda(G), $  we have  $m(G)\leq  \frac{n^2}{2\alpha}$, which implies $m(G^c)\geq  \frac{\alpha-1}{2\alpha}n^2-\frac{n}{2}$. So  $\lambda(G^c)\geq \frac{\alpha-1}{\alpha}n-1$. By the Theorem \ref{bv}, we can get
\begin{eqnarray*}
i_\alpha(G)= k_\alpha(G^c)&\geq& \left(\frac{\alpha-1}{\alpha}-\frac{1}{n}-1+\frac{1}{\alpha-1}\right)\frac{(\alpha-1)(\alpha-2)}{\alpha}\left(\frac{n}{\alpha-1}\right)^\alpha\\
&=&\left(\frac{1}{\alpha(\alpha-1)}-\frac{1}{n}\right)\frac{(\alpha-1)(\alpha-2)}{\alpha}\left(\frac{n}{\alpha-1}\right)^\alpha.
\end{eqnarray*}
This completes the proof.
\end{proof}
{\bf Remark} From  theorem~\ref{innu}, $i_s(G)$ is about  $O(n^\alpha)$ if $\lambda(G)\leq \frac{n}{\alpha}$.

\section{Proof of Theorems~\ref{T2} and \ref{T3}}

In order to prove Theorems\ref{T2} and \ref{T3}, we need some lemmas.

\begin{lemma}\label{Z}
Let $n=k\alpha$ and $k> \frac{17\alpha+15}{8}$. If a connected graph $G$ has the minimum spectra radius among all graphs in $\mathcal{G}_{n,\alpha}$, then $G$ has to be in $\mathcal{T}_{n,\alpha}$.
\end{lemma}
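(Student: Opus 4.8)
The plan is to show that a minimizer $G$ must decompose into $\alpha$ vertex-disjoint cliques of order $k$ whose contraction is a tree. Throughout I use that $G$ minimizes the spectral radius, so by Lemma~\ref{L1} we have $\lambda(G)=\lambda_{n,\alpha}\le\lambda(P_{n,\alpha})<k-1+\frac{2}{k-1}$. First I would recover a clique cover of $G$ by $\alpha$ cliques. Since $\frac{2m(G)}{n}\le\lambda(G)$, the spectral bound gives $m(G)<\frac{n}{2}\left(k-1+\frac{2}{k-1}\right)$, whence, after substituting $n=k\alpha$,
\[
m(G^c)=\frac{n(n-1)}{2}-m(G)>\frac{(\alpha-1)kn}{2}-\frac{n}{k-1}.
\]
The complement satisfies $\omega(G^c)=\alpha(G)=\alpha$, so $G^c$ is $K_{\alpha+1}$-free. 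If $\chi(G^c)\ge\alpha+1$, then Theorem~\ref{T4}(2) with $r=\alpha$ forces $m(G^c)\le\frac{(\alpha-1)n^2}{2\alpha}-\frac{n}{2\alpha}+\frac{17}{16}-\frac{1}{8\alpha}$. Comparing the two estimates, the hypothesis $k>\frac{17\alpha+15}{8}$ is precisely what makes the lower bound exceed this upper bound, a contradiction. Hence $\chi(G^c)\le\alpha$, i.e. $V(G)$ partitions into cliques $C_1,\ldots,C_\alpha$ of $G$.

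Next I would pin down the sizes. Each $C_i$ induces $K_{|C_i|}$, so $\lambda(G)\ge|C_i|-1$; combined with $\lambda(G)<k-1+\frac{2}{k-1}<k$ (valid since $k\ge 7$ under the hypothesis), this gives $|C_i|\le k$ for every $i$. As $\sum_{i=1}^{\alpha}|C_i|=n=k\alpha$ with exactly $\alpha$ summands, all cliques must have order exactly $k$.

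Finally, the tree structure. Let $H$ be the graph on $\{C_1,\ldots,C_\alpha\}$ with $C_i\sim C_j$ whenever $G$ has an edge between them; connectivity of $G$ makes $H$ connected. Deleting any inter-clique edge $e$ leaves the partition into $k$-cliques intact, so $\alpha(G-e)\le\alpha$, while $\alpha(G-e)\ge\alpha(G)=\alpha$; thus $\alpha(G-e)=\alpha$. Moreover, for a connected (hence irreducible) graph, deleting an edge strictly decreases the spectral radius. So if two cliques were joined by two or more edges, or if $H$ contained a cycle, I could delete a redundant edge, respectively an edge lying on a cycle of $H$, keeping $G-e$ connected and with independence number $\alpha$ yet strictly lowering $\lambda$, contradicting minimality. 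Therefore there is exactly one edge between adjacent cliques and $H$ is acyclic, hence a tree; this is exactly the assertion $G\in\mathcal{T}_{n,\alpha}$.

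The main obstacle is the first step: extracting the clique cover from the spectral bound through the Turán-type estimate of Theorem~\ref{T4}, and verifying that the threshold $k>\frac{17\alpha+15}{8}$ is exactly what the edge-count comparison demands. The deletion arguments of the last step are routine once one confirms that each deletion keeps the graph both connected and of independence number $\alpha$.
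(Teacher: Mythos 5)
Your proposal is correct and follows essentially the same route as the paper: Lemma~\ref{L1} gives the bound $\lambda(G)<k-1+\tfrac{2}{k-1}$, the edge count of $G^c$ combined with Theorem~\ref{T4}(2) rules out $\chi(G^c)\ge\alpha+1$ under the hypothesis $k>\tfrac{17\alpha+15}{8}$, the absence of $K_{k+1}$ forces all $\alpha$ colour classes of $G^c$ to have size exactly $k$, and strict monotonicity of the spectral radius under edge deletion forces single connecting edges and an acyclic quotient. Your write-up is, if anything, slightly more explicit than the paper's about why deleting inter-clique edges preserves both connectivity and the independence number.
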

\begin{proof}
 By Lemma~\ref{L1},   $\lambda(G)=\lambda_{n,\alpha}\le k-1+\frac{2}{k-1}$ and $G$ does not contain $K_{k+1}$. Further, we claim that the chromatic number of $G^c$ is $\alpha$. Suppose that the chromatic number of $G^c$ is at least $\alpha +1$. By Lemma~\ref{T4},
\begin{eqnarray*}
|E(G)|&\geq& \frac{n(n-1)}{2}-\frac{(\alpha-1)n^2}{2\alpha}+\frac{n}{2\alpha}-\frac{17}{16}+\frac{1}{8\alpha}\\
&=&\frac{kn}{2}-\frac{n-k}{2}-\frac{17}{16}+\frac{k}{8n}\\
&=&\frac{(k-1)n}{2}+\frac{k}{2}-\frac{17}{16}+\frac{k}{8n}.
\end{eqnarray*}
Then by $k> \frac{17\alpha+15}{8}$, we have
$$
 \lambda(G) \geq  \frac{2|E(G)|}{n}
\ge  k-1+\frac{1}{\alpha}-\frac{17}{8n}+\frac{k}{4n^2}
 > k-1+\frac{2}{k-1}.
 $$
  Hence the chromatic number of $G^c$ is $\alpha$, i.e., $G^c$ is an $\alpha$-partite graph. Assume the parts of $G^c$ are $V_1,V_2,...,V_\alpha$. Since $G$ does not contain $K_{k+1}$ and $n=k\alpha$, $|V_1|=|V_2|=...=|V_\alpha|=k$. Moreover, the induced subgraph by $V_i\bigcup V_j$ ($i\neq j$) is not complete bipartite, since $G$ is connected. Note that the spectral radius of a connected graph is an  strictly increasing function with respect to adding an edge.  Hence $G$ has to be in  $\mathcal{T}_{n,\alpha}$.
\end{proof}

 \begin{lemma}\label{T1}
 Let $G$ be a non-bipartite connected graph of order $n$ and  $x=(x_1,x_2,...,x_n)^T$ be the Perron vector of $A(G)$.  If $\sigma_s(v_i)$ is the number of all closed walks of length $k$ containing vertex $v_i$, $ i=1, \cdots,n$, then
  $$ \lim_{s\rightarrow\infty}\frac{\sigma_s(v_i)}{\sigma_s(v_j)}\ge 1$$
  if and only if $x_i\ge x_j$.
\end{lemma}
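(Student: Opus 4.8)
The plan is to read $\sigma_s(v_i)$ as the number of closed walks of length $s$ based at $v_i$, i.e. as the diagonal entry $(A(G)^s)_{ii}$, and then to extract its asymptotics directly from the spectral decomposition of $A(G)$. The whole statement will then follow from comparing leading-order terms.

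First I would write $A=A(G)$ in spectral form $A=\sum_{\ell=1}^n \lambda_\ell\, u_\ell u_\ell^T$, where $\lambda_1\ge \lambda_2\ge\cdots\ge\lambda_n$ are the eigenvalues and $\{u_\ell\}$ is an orthonormal basis of eigenvectors with $u_1=x$ the Perron vector, normalized so that $\|x\|_2=1$. Since $G$ is connected, Perron--Frobenius gives that $\lambda_1$ is simple, $\lambda_1>\lambda_2$, and $x$ has strictly positive entries; since $G$ is moreover non-bipartite, one has $\lambda_1>-\lambda_n$, hence $\lambda_1>|\lambda_\ell|$ for every $\ell\ge 2$. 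This strict dominance of $\lambda_1$ in absolute value is exactly where the non-bipartite hypothesis enters, and it is the step I expect to be the crux: for a bipartite graph $-\lambda_1$ would also be an eigenvalue, and the limit computed below would acquire an extra additive term $(u_n)_i^2$, destroying the clean conclusion.

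Next, raising to the $s$-th power and reading off the diagonal gives
$$\sigma_s(v_i)=(A^s)_{ii}=\sum_{\ell=1}^n \lambda_\ell^{\,s}\,(u_\ell)_i^2=\lambda_1^{\,s}x_i^2+\sum_{\ell=2}^n \lambda_\ell^{\,s}\,(u_\ell)_i^2.$$
Dividing by $\lambda_1^{\,s}$ and using $|\lambda_\ell/\lambda_1|<1$ for $\ell\ge 2$, each term of the remaining sum tends to $0$, so $\lambda_1^{-s}\sigma_s(v_i)\to x_i^2$ as $s\to\infty$. Since $x_j>0$, the same applies to $v_j$ and the ratio of the two asymptotics yields
$$\lim_{s\to\infty}\frac{\sigma_s(v_i)}{\sigma_s(v_j)}=\frac{x_i^2}{x_j^2}.$$

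Finally I would conclude by positivity: because $x_i,x_j>0$, the limiting ratio satisfies $x_i^2/x_j^2\ge 1$ if and only if $x_i\ge x_j$, which is precisely the claimed equivalence (and in particular the limit exists, as the argument shows). The only genuine care needed is the uniform control of the subdominant terms, which is guaranteed by the strict spectral gap $\lambda_1>|\lambda_\ell|$ for $\ell\ge 2$; everything else is a routine comparison of leading terms.
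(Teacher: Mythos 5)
Your proposal is correct and follows essentially the same route as the paper: spectral decomposition of $A$, identification of $\sigma_s(v_i)$ with the diagonal entry $(A^s)_{ii}$, and the strict dominance $\lambda_1>|\lambda_\ell|$ from connectedness plus non-bipartiteness to kill the subdominant terms. In fact your computation of the limit as $x_i^2/x_j^2$ is the accurate one (the paper states $x_i/x_j$, overlooking that $e_i^Txx^Te_i=x_i^2$), though since the Perron entries are positive this makes no difference to the stated equivalence.
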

\begin{proof} By spectral decomposition theorem, there exist eigenvalues $\lambda_2,\cdots, \lambda_n$ and corresponding orthogonal eigenvectors $\xi_2,\cdots, \xi_n$ such that
 $$A=\lambda(A)xx^T+\lambda_2\xi_2\xi_2^T+...+\lambda_n\xi_n\xi_n^T.$$
 Then
 $$A^s=\lambda(A)^skxx^T+\lambda_2^s\xi_2\xi_2^T+...+\lambda_n^s\xi_n\xi_n^T,$$
  Let $e_i$ be the column vector whose the $i$-th entry is 1 and 0 otherwise, $i=1,\dots, n$.
    Then $\sigma_s(v_i)=e_i^TA^se_i $.  Moreover $\lambda(G)>|\lambda_i|$ for $i=2,\dots, n$ since $G$ is non-bipartite and connected.
    Hence
    \begin{eqnarray*}
    \lim_{s\rightarrow\infty}\frac{\sigma_s(v_i)}{\sigma_s(v_j)}&=&
    \lim_{s\rightarrow\infty}\frac{e_i^TA^se_i}{e_j^TA^se_j}\\
    &=&\lim_{s\rightarrow\infty}\frac{e_ixx^Te_i+\frac{\lambda_2^s}{\lambda(A)^s}e_i^T\xi_2\xi_2^Te_i
    +\dots+\frac{\lambda_n^s}{\lambda(A)^s}e_i^T\xi_n\xi_n^Te_i}
    {e_jxx^Te_j+\frac{\lambda_2^s}{\lambda(A)^s}e_j^T\xi_2\xi_2^Te_j
    +\dots+\frac{\lambda_n^s}{\lambda(A)^s}e_j^T\xi_n\xi_n^Te_j}\\
    &=& \frac{x_i}{x_j}.\end{eqnarray*}
  This completes the proof.
\end{proof}

\begin{lemma}\label{lambda=k-1} Let $G$  be a graph of  order $n=k\alpha$ with the independence number $\alpha$.
 Then $\lambda(G)\geq k-1$ equality if and only if $G$ is union of the number $\alpha$  complete graphs $K_k$.
\end{lemma}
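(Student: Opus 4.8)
The plan is to sandwich $\lambda(G)$ between the average degree of $G$ and the chromatic‑number/edge information coming from Tur\'an's theorem, which is already recalled in the introduction. For any graph $G$ of order $n$ with $m$ edges one has the standard estimate $\lambda(G)\ge \frac{2m}{n}$ (Rayleigh quotient with the all‑ones vector), the very inequality used in the proof of Theorem~\ref{ess}. Since $G$ has order $n=k\alpha$ and independence number $\alpha$, Tur\'an's theorem says that $G$ has at least as many edges as the Tur\'an graph $T_{n,\alpha}$, which for $n=k\alpha$ is exactly the disjoint union of $\alpha$ balanced cliques $K_k$ and thus has $\frac{\alpha k(k-1)}{2}=\frac{n(k-1)}{2}$ edges. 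Hence $m\ge \frac{n(k-1)}{2}$, and chaining the two bounds gives $\lambda(G)\ge \frac{2m}{n}\ge k-1$, the desired inequality.

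For the equality case I would simply read off the chain. If $\lambda(G)=k-1$, then $k-1=\lambda(G)\ge\frac{2m}{n}\ge k-1$ forces $\frac{2m}{n}=k-1$, i.e. $m=\frac{n(k-1)}{2}$. Thus $G$ attains the minimum number of edges among all order‑$n$ graphs with independence number $\alpha$, and the uniqueness clause of Tur\'an's theorem (the Tur\'an graph is the unique minimizer) immediately yields that $G$ is the disjoint union of $\alpha$ copies of $K_k$. The converse is a one‑line computation: each component $K_k$ has spectral radius $k-1$, so $\lambda(\alpha K_k)=k-1$, and choosing one vertex from each clique shows its independence number is $\alpha$, so $\alpha K_k$ is admissible and sharp.

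The point I would be most careful about is precisely this equality analysis. The tempting but wasteful route is to ask when $\lambda(G)$ actually equals the average degree $\frac{2m}{n}$; that would demand $G$ be regular and still leave the task of singling it out among regular graphs with the prescribed independence number. The cleaner observation is that the Rayleigh quotient need never be attained: chaining $\lambda(G)\ge\frac{2m}{n}\ge k-1$ shows that $\lambda(G)=k-1$ already drags the edge count down to its Tur\'an minimum, after which the \emph{uniqueness} in Tur\'an's theorem does all the remaining work. This also sidesteps any discussion of the Perron vector and of connectivity, which matters here because $G$ is not assumed connected and the extremal graph $\alpha K_k$ is disconnected.

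As a robustness check I would keep in mind an alternative, more hands‑on derivation of the bound: each colour class is independent of size at most $\alpha$, so $\chi(G)\ge \frac{n}{\alpha}=k$, and combined with Wilf's inequality $\lambda(G)\ge\chi(G)-1$ this again gives $\lambda(G)\ge k-1$; the equality case could then be recovered componentwise from $|H|\le \chi(H)\alpha(H)\le k\,\alpha(H)$ summing to an equality, together with the fact that Wilf's bound is tight only for complete graphs and odd cycles (the odd cycle $C_{2\ell+1}$ being forced to $C_3=K_3$ by the constraint $|H|=k\,\alpha(H)$). I would nonetheless prefer the Tur\'an argument, since its equality statement is already quoted in the introduction and requires no case analysis.
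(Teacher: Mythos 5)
Your proof is correct, and it shares the paper's skeleton --- both arguments start from the Rayleigh bound $\lambda(G)\ge 2m/n$ and convert the spectral hypothesis into an edge count --- but the structural step is done with a different key ingredient. The paper does not invoke classical Tur\'an uniqueness directly: assuming $\lambda(G)\le k-1$, it deduces $m(G^c)\ge \frac{(\alpha-1)n^2}{2\alpha}$ and then applies the Du--Shi edge bound (Theorem~\ref{T4}(2)) to rule out $\chi(G^c)\ge\alpha+1$, so that the colour classes of $G^c$ are cliques of $G$, each of size exactly $k$ because $K_{k+1}\not\subseteq G$ (as $\lambda(K_{k+1})=k>k-1$); this forces $G$ to contain a spanning $\alpha K_k$ and yields the bound and the equality case. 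Your route replaces this chromatic-number detour with the uniqueness clause of Tur\'an's theorem in complement form, which is cleaner and also dispenses with the paper's separate citation of \cite{xu2009} for $k=1,2$ (needed there because the additive error term $\frac{17}{16}-\frac{1}{8\alpha}$ in Theorem~\ref{T4}(2) only produces a contradiction for $k\ge 3$). Your treatment of the equality case is the right one: collapsing the chain $k-1=\lambda(G)\ge 2m/n\ge k-1$ to force $m$ down to the Tur\'an minimum, rather than asking when $\lambda=2m/n$ is attained, avoids any regularity or connectivity discussion, and the converse for $\alpha K_k$ is immediate. The Wilf-inequality aside gives a valid second proof of the inequality, and you are right to prefer the Tur\'an argument for the equality analysis.
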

\begin{proof} If $k=1$ or $2$, the assertion follows from \cite{xu2009}. Assume that $k\ge 3$. Suppose that $\lambda(G)\leq k-1$. Then the size $m(G)$ of $G$ at most $ \frac{\lambda(G) n}{2}\le \frac{n-\alpha}{2\alpha}n$, which implies  the size  $m(G^c)$ of $G^c$ at least $ \frac{\alpha-1}{2\alpha}n^2$. We have the claim that  the chromatic number of $G^c$ is
 $\alpha$. In fact, if the chromatic number of $G^c$ is
 at least $\alpha+1$, then  by Theorem \ref{T4}, $m(G^c)\leq \frac{(\alpha-1)n^2}{2\alpha}-\frac{n}{2\alpha}+\frac{17}{16}-\frac{1}{8\alpha}\leq \frac{(\alpha-1)n^2}{2\alpha}-\frac{3\alpha}{2\alpha}+\frac{17}{16}-\frac{1}{8\alpha}< \frac{(\alpha-1)n^2}{2\alpha}$. This is a contradiction. Thus  $G^c$ is a $\alpha$-partite graph. Moreover,  suppose the parts of $G^c$ are $V_1,V_2,...,V_\alpha$. Then $|V_1|=|V_2|=...=|V_\alpha|=k$, since $G$ can not contain $K_{k+1}$. Hence $G$ is union of the number $\alpha$  complete graphs $K_k$. This completes the proof.
\end{proof}

\begin{lemma}\label{L2}
Let $n=k\alpha>2\alpha$ and $G\in  \mathcal{G}_{n,\alpha}$  be  a graph obtained by joining an edge from a non-cut vertex of a graph $H\in \mathcal{G}_{n-k(l+p), \alpha-(l+p)} $ and a non-cut vertex of $P_{k(l+p),l+p}$ (see $Fig.2$). Let  $G'$ be the graph obtained from $G$ by deleting the edge $v_3v_4$ and adding edge $v_1v_4$.  If $H$ contains a copy of $P_{kl, l}$ whose vertex set does not contains $v_1$, then $\lambda(G')> \lambda(G)$.\\ \\
\begin{picture}(60, 18)
\put(22,6){$K_k$}
\put(22,16){$V_{l+p}$}
\put(25,8){\oval(8,12)}
\put(29,8){\line(1,0){4}}
\put(29,8){\circle*{1}}
\put(34,8){$\ldots$}
\put(42,6){$K_k$}
\put(42,16){$V_{p+1}$}
\put(45,8){\oval(8,12)}
\put(49,8){\line(1,0){7}}
\put(49,8){\circle*{1}}
\put(56,8){\circle*{1}}
\put(52,10){$v_3$}
\put(49,5){$v_4$}
\put(64.5,10){$v_2$}
\put(57,6){$K_k$}
\put(57,16){$V_{p}$}
\put(60,8){\oval(8,12)}
\put(72,6){$K_k$}
\put(72,16){$V_{p-1}$}
\put(75,8){\oval(8,12)}
\put(64,8){\line(1,0){7}}
\put(79,8){\line(1,0){4}}
\put(64,8){\circle*{1}}
\put(71,8){\circle*{1}}
\put(79,8){\circle*{1}}
\put(84,8){$\ldots$}
\put(92,6){$K_k$}
\put(92,16){$V_1$}
\put(95,8){\oval(8,12)}
\put(99,8){\line(1,0){7}}
\put(99,8){\circle*{1}}
\put(106,8){\circle*{1}}
\put(102,10){$v_1$}
\put(101,22){$.............................$}
\put(101,-2.5){$\vdots$}
\put(101,2){$\vdots$}
\put(101,6){$\vdots$}
\put(101,10){$\vdots$}
\put(101,14){$\vdots$}
\put(101,18){$\vdots$}
\put(101,-2.5){$............................$}
\put(133,-2.5){$\vdots$}
\put(133,2){$\vdots$}
\put(133,6){$\vdots$}
\put(133,10){$\vdots$}
\put(133,14){$\vdots$}
\put(133,18){$\vdots$}
\put(107,6){$K_k$}
\put(107,16){$V_0$}
\put(110,8){\oval(8,12)}
\put(112,8){\line(1,0){7}}
\put(112,10){\line(1,0){7}}
\put(112,6){\line(1,0){7}}
\put(124,8){\circle{15}}
\put(125,18){$H$}
\put(135,6){$G$}
\end{picture}\\ \\ \\
\begin{picture}(60, 20)
\put(42,-4){$K_k$}
\put(42,6){$V_{l+p}$}
\put(45,-2){\oval(8,12)}
\put(49,-2){\line(1,0){7}}
\put(49,-2){\circle*{1}}
\put(56,-2){\circle*{1}}
\put(57,-4){$K_k$}
\put(57,6){$V_2$}
\put(60,-2){\oval(8,12)}
\put(64,-2){\line(1,0){4}}
\put(64,-2){\circle*{1}}
\put(69,-2){$\ldots$}
\put(77,-4){$K_k$}
\put(77,6){$V_{p+1}$}
\put(80,-2){\oval(8,12)}
\put(84,-2){\line(2,3){7}}
\put(84.5,-4.5){$v_4$}
\put(84,-2){\circle*{1}}
\put(41,18){\circle*{1}}
\put(37,20){$v_3$}
\put(49.5,20){$v_2$}
\put(42,16){$K_k$}
\put(42,26){$V_{p}$}
\put(45,18){\oval(8,12)}
\put(57,16){$K_k$}
\put(57,26){$V_{p-1}$}
\put(60,18){\oval(8,12)}
\put(49,18){\line(1,0){7}}
\put(64,18){\line(1,0){4}}
\put(49,18){\circle*{1}}
\put(56,18){\circle*{1}}
\put(64,18){\circle*{1}}
\put(69,18){$\ldots$}
\put(77,16){$K_k$}
\put(77,26){$V_1$}
\put(80,18){\oval(8,12)}
\put(84,18){\line(2,-3){7}}
\put(84,18){\circle*{1}}
\put(91,8){\circle*{1}}
\put(86,22){$.............................$}
\put(86,-2.5){$\vdots$}
\put(86,2){$\vdots$}
\put(86,6){$\vdots$}
\put(86,10){$\vdots$}
\put(86,14){$\vdots$}
\put(86,18){$\vdots$}
\put(86,-2.5){$............................$}
\put(118,-2.5){$\vdots$}
\put(118,2){$\vdots$}
\put(118,6){$\vdots$}
\put(118,10){$\vdots$}
\put(118,14){$\vdots$}
\put(118,18){$\vdots$}
\put(87,8){$v_1$}
\put(92,6){$K_k$}
\put(92,16){$V_0$}
\put(95,8){\oval(8,12)}
\put(97,8){\line(1,0){7}}
\put(97,10){\line(1,0){7}}
\put(97,6){\line(1,0){7}}

\put(109,8){\circle{15}}
\put(109,18){$H$}
\put(120,6){$G'$}
\put(69,-16){$Fig.2$}
\end{picture}\\ \\
\end{lemma}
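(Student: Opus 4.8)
The plan is to reduce the strict inequality $\lambda(G')>\lambda(G)$ to a single comparison of Perron-vector coordinates of $G$ and then settle that comparison through Lemma~\ref{T1}. First I would invoke the Rayleigh quotient. Let $x=(x_1,\dots,x_n)^T$ be the positive Perron vector of $A(G)$ normalized by $x^Tx=1$, so that $\lambda(G)=x^TA(G)x$; here $x>0$ because $G$ is connected, and $G$ is non-bipartite since $k\ge 3$ (as $n=k\alpha>2\alpha$), so Lemma~\ref{T1} is applicable. Since $G'$ arises from $G$ by deleting $v_3v_4$ and adding $v_1v_4$,
$$x^TA(G')x=\lambda(G)+2x_{v_4}(x_{v_1}-x_{v_3}).$$
As $x_{v_4}>0$, proving $x_{v_1}\ge x_{v_3}$ already yields $\lambda(G')\ge x^TA(G')x\ge\lambda(G)$. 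The strictness would then be automatic: if $\lambda(G')=\lambda(G)$, then $x$ attains the maximum of the Rayleigh quotient of $A(G')$ and hence is the Perron vector of $G'$, so $A(G')x=\lambda(G)x=A(G)x$; reading the $v_1$-entry of $(A(G')-A(G))x=0$ gives $x_{v_4}=0$, contradicting $x>0$. Thus the entire lemma collapses to the inequality $x_{v_1}\ge x_{v_3}$.

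Next I would translate $x_{v_1}\ge x_{v_3}$ into a walk count. Observe that $G$ is exactly $H$ with the clique-path tail $V_1V_2\cdots V_{l+p}$ attached to $H$ by the single joining edge at $v_1\in V_1$, so that edge is a cut edge and $v_3\in V_p$, $v_4\in V_{p+1}$ lie inside the tail. By Lemma~\ref{T1} it suffices to show that $\sigma_s(v_1)\ge\sigma_s(v_3)$ for all large $s$, where $\sigma_s(v)=(A^s)_{vv}$ counts the closed walks of length $s$ based at $v$. I would therefore aim to construct a length-preserving injection from the closed walks based at $v_3$ into the closed walks based at $v_1$.

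The construction of this injection is where the hypothesis on $H$ enters, and it is the hard part. Each closed walk based at $v_3$ splits into excursions into the pendant $W=V_{p+1}\cdots V_{l+p}$ (each entering and returning through $v_4$) interleaved with excursions into the backbone $R=G-V(W)$. The assumption that $H$ contains a copy $W'\cong P_{kl,l}$ of $W$ not meeting $v_1$ is precisely what allows me to re-route every $W$-excursion of a $v_3$-walk into a matching excursion into $W'$ on the $H$-side of $v_1$, while the reflection of the segment $V_1\cdots V_p$ transports the backbone excursions of $v_3$ to backbone excursions of $v_1$ of equal length. The main obstacle is to make this map simultaneously length-preserving and injective; its role is to offset the closed walks that the pendant $W$ contributes at $v_3$, an offset that is genuinely necessary because for small $H$ the Perron vector of a long clique-path can peak away from the $H$-end, and then $x_{v_1}\ge x_{v_3}$ could fail.
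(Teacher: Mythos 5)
Your reduction is sound and is in fact the same framework the paper uses: the Rayleigh--quotient identity $x^{T}A(G')x=\lambda(G)+2x_{v_4}(x_{v_1}-x_{v_3})$, the strictness argument obtained by reading the $v_1$-entry of $(A(G')-A(G))x=0$, and the appeal to Lemma~\ref{T1} to turn the Perron-entry comparison $x_{v_1}\ge x_{v_3}$ into a closed-walk count are all exactly the paper's steps. The genuine gap is the one you flag yourself: the length-preserving injection $\mathcal{W}(s,v_3)\hookrightarrow\mathcal{W}(s,v_1)$ is never constructed, and that injection is the entire combinatorial content of the lemma --- everything surrounding it is routine. A reduction of the statement to an unproved inequality about walk counts is not a proof of the statement.

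Moreover, the direct $v_3\to v_1$ route you sketch meets a concrete obstruction that your excursion/reflection picture does not resolve. A closed walk at $v_3$ that avoids the tail $W=V_{p+1}\cup\dots\cup V_{l+p}$ lives in $V_p\cup V_{p-1}\cup\dots\cup V_1\cup H$ and may penetrate arbitrarily deep into $H$; your reflection of the segment $V_1\cdots V_p$ would send such a walk to a walk at $v_1$ that must continue past $V_p$ into $W$, and $W$, having only $l$ cliques, cannot absorb the image of an arbitrary excursion into $H$. In the other direction, the hypothesis only guarantees a copy $W'$ of $P_{kl,l}$ in $H$ avoiding $v_1$; it does not place $v_1$ adjacent to an end-clique of $W'$, so ``re-routing each $W$-excursion of a $v_3$-walk into $W'$'' is not available as stated. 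The paper sidesteps both difficulties by factoring the comparison through the second cut vertex $v_2$ of $V_p$: it shows $\sigma_s(v_3)\le\sigma_s(v_2)\le\sigma_s(v_1)$, in each step fixing every closed walk that meets both vertices and injecting the walks confined to one branch of the relevant cut vertex into the walks confined to the opposite branch, which is exactly where the hypothesis that $H$ contains a $P_{kl,l}$ missing $v_1$ is consumed. To complete your argument you would need either to adopt that two-step factorization or to supply an honest direct injection; as written, the inequality $x_{v_1}\ge x_{v_3}$ remains unproved.
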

\begin{proof}
Let $x=(x(u), u\in V(G))^T$ be the Perron vector of $G\in \mathcal{G}(n,\alpha)$ and let
$\mathcal{W}(s,v_i)$ be the set of all closed walks of length $s$ containing $v_i$, $i=1,2,3$. We claim that there exists an injective mapping $\varphi$ from $ \mathcal{W}(s,v_3)$ to $\mathcal{W}(s,v_2)$. In fact, if $W$ is a closed walk of length $s$ containing $v_3$ and $v_2$, let $\varphi(W)=W$. If $W$ is a closed walk of length $s$ containing $v_3$ and no $v_2$, then there exists a corresponding  closed walk $W^{\prime}=\varphi(W)$ of length $s$ containing $v_2$ and no $v_3$ in the subgraph $P_{k(2l+1),2l+1} $ in $G$. Hence $\sigma(s, v_3)\le \sigma(s,v_2)$.
Similarly,  there exists an injective mapping $ \phi$ from
$ \mathcal{W}(s,v_2)$ to $\mathcal{W}(s,v_1)$, which implies $\sigma(s, v_2)\le \sigma(s,v_1)$.  By Lemma~\ref{T1},
$x(v_3)\leq x(v_2)\leq x(v_1)$. Hence
  $\lambda(G)=x^TA(G)x=x^TA(G')x-2(x(v_1)-x(v_3))x(w)\le x^TA(G')x \leq \lambda(G')$.
  Moreover, if $\lambda(G)= \lambda(G')$, then  $x(v_1)=x(v_3)$ and $x$ is an eigenvector of $A(G')$. But it is impossible. Therefore $\lambda(G)<\lambda(G')$.
\end{proof}
\begin{lemma}\label{L5}
Let $n=k\alpha>2\alpha$ and $G\in  \mathcal{G}_{n,\alpha}$ be a graph with two vertices $u$ and $v$ which are in clique of order $k$, if $u$ is adjacent with $u_1$,$u_2$,..., $u_t$ which belong to $t$ vertex disjoint clique paths $P_{kl_1,l_1}$,$P_{kl_2,l_1}$,..., $P_{kl_t,l_t}(t>1)$ respectively, and $d_G(u)-t=d_G(v)=k-1$. Let $G'$ be the graph obtained from $G$ by deleting the edge $u_{1}u$ and adding edge $u_{1}v$. Then $\lambda(G')< \lambda(G)$.
\end{lemma}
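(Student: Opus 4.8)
The plan is to isolate the single structural difference between $G$ and $G'$ and reduce the whole comparison to a walk-count inequality inside a fixed subgraph. Write $P=P_{kl_1,l_1}$ for the clique path that is being reattached and set $K=G-V(P)$. Both $G$ and $G'$ arise from the disjoint union $K\cup P$ by adding a single bridge joining $u_1\in V(P)$ to a vertex of the common clique $K_k$: this bridge is $u_1u$ in $G$ and $u_1v$ in $G'$, and nothing else changes. The decisive point is that in $K$ the vertex $u$ dominates $v$, i.e. $N_K(v)\setminus\{u\}\subseteq N_K(u)\setminus\{v\}$. Indeed, once $P$ is deleted the vertex $u_1$ is gone, so $v$ is left adjacent only to the $k-1$ other vertices of its clique, each of which is also adjacent to $u$, while $u$ additionally meets $u_2,\dots,u_t$. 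This domination is completely insensitive to the sizes $l_2,\dots,l_t$, which is exactly why one cannot finish with a one-line Rayleigh-quotient argument based on the Perron vector $y$ of $G'$: that would require $y(u)\ge y(v)$ in $G'$ itself, which can fail when the reattached path $P$ happens to be larger than all of $P_{kl_2,l_2},\dots,P_{kl_t,l_t}$. Passing to $K$ removes this dependence.

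The crux is then the walk-domination inequality $[A(K)^m]_{uu}\ge[A(K)^m]_{vv}$ for every $m\ge 0$, strict for at least one $m$; this is the quantitative form of Lemma~\ref{T1} inside $K$. I would prove it by exploiting the clique: since $K_k$ is vertex-transitive, $u$ and $v$ contribute equally along closed walks that never leave the clique, and in fact $v,c_1,\dots,c_{k-2}$ are mutual twins in $K$. The only asymmetry is that an excursion into one of the attached paths can be entered solely through $u$. So I would decompose each closed walk of $K$ into its clique part together with the path-excursions inserted at its successive visits to $u$, and build an injection from closed walks based at $v$ into those based at $u$ by redirecting or inserting these excursions at $u$-visits. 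The subtlety to watch is precisely the edge $uv$: the naive recipe of swapping the labels $u$ and $v$ along a walk is not a valid injection, because the moment a walk uses a step $u\to u_j$ the swap would send $v$ into a path to which it is not adjacent. Treating the path-excursions separately from the clique part is what makes the injection legitimate.

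To convert this into the spectral statement I would use the bridge formula for characteristic polynomials. For the cut-edge $u_1u$ one has $\phi_G=\phi_K\phi_P-\phi_{K-u}\phi_{P-u_1}$, and likewise $\phi_{G'}=\phi_K\phi_P-\phi_{K-v}\phi_{P-u_1}$, whence $\phi_G-\phi_{G'}=(\phi_{K-v}-\phi_{K-u})\phi_{P-u_1}$. Evaluating at $\lambda'=\lambda(G')$, where $\phi_{G'}(\lambda')=0$, gives $\phi_G(\lambda')=(\phi_{K-v}(\lambda')-\phi_{K-u}(\lambda'))\,\phi_{P-u_1}(\lambda')$. Since $K$ and $P$ are proper connected subgraphs of the connected graph $G'$, we have $\lambda'>\lambda(K)$ and $\lambda'>\lambda(P)\ge\lambda(P-u_1)$, so $\phi_K(\lambda')>0$ and $\phi_{P-u_1}(\lambda')>0$. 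The resolvent identity $\phi_{K-w}(x)/\phi_K(x)=\sum_{m\ge0}[A(K)^m]_{ww}\,x^{-m-1}$, valid for $x>\lambda(K)$, combined with the walk-domination inequality, then yields $\phi_{K-u}(\lambda')>\phi_{K-v}(\lambda')$. Hence $\phi_G(\lambda')<0$, and since $\phi_G$ is monic with largest real root $\lambda(G)$ (by Perron--Frobenius), we conclude $\lambda(G)>\lambda'=\lambda(G')$, the strictness coming for free once the walk-domination is strict for some $m$. I expect the genuine obstacle to be the walk-domination inequality of the middle step in the presence of the edge $uv$; the reduction to $K$ and the polynomial bookkeeping are routine by comparison.
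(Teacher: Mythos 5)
Your outer framework is sound: the reduction via Schwenk's bridge formula to the single inequality $\phi_{K-u}(\lambda')>\phi_{K-v}(\lambda')$ at $\lambda'=\lambda(G')$ is correct, as are the sign observations $\phi_K(\lambda')>0$, $\phi_{P-u_1}(\lambda')>0$ and the resolvent identity. But the entire weight of the argument rests on the walk-domination inequality $[A(K)^m]_{uu}\ge[A(K)^m]_{vv}$, which you only sketch, and the sketched injection does not go through as described. If you decompose a closed walk into its clique segments and its excursions out of $u$ and compare pattern by pattern, the comparison fails for individual patterns: writing $D_\ell=[A(K_k)^\ell]_{uu}$ and $O_\ell=[A(K_k)^\ell]_{uv}$ one has $D_\ell=O_\ell+(-1)^\ell$, so $D_1=0<1=O_1$, and the pattern ``$v\to u$, excursion, $u\to v$'' contributes $O_1\,g_j\,O_1=g_j$ to the count at $v$ while the corresponding pattern at $u$ contributes $D_1\,g_j\,D_1=0$ (there is no loop at $u$). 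Hence no injection can preserve the excursion pattern; it must trade walks between patterns, and that redistribution is precisely the combinatorial work you have deferred. Since you yourself flag this as ``the genuine obstacle,'' the proposal has a genuine gap at its central step: the aggregate inequality does appear to be true here, but proving it \emph{is} the problem.

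For comparison, the paper avoids walk counting entirely with a short two-case Rayleigh-quotient trick: let $x$ be the Perron vector of $G'$. If $x(u)\le x(v)$, relocating the edges $uu_2,\dots,uu_t$ from $u$ to $v$ turns $G'$ into a graph isomorphic to $G$ (with the roles of $u$ and $v$ exchanged) without decreasing $x^TAx$; if $x(u)\ge x(v)$, relocating the single edge $vu_1$ from $v$ to $u$ produces $G$ itself, again without decreasing $x^TAx$. Either way $\lambda(G)\ge x^TA(G')x=\lambda(G')$, and equality is excluded because $x$ cannot remain an eigenvector after the modification. The point of the case split is that one never has to decide which of $x(u),x(v)$ is actually larger --- exactly the difficulty that pushed you toward the subgraph $K$. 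To salvage your route you would need to prove the walk inequality in aggregate (for instance by a transfer-matrix computation on the quotient of $K$ by the twin class $\{v,c_1,\dots,c_{k-2}\}$); otherwise the paper's case split is the economical fix.
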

\begin{proof}
Let $x$ be the Perron vector of $A(G')$, then $x(u)\leq x(v)$ or $x(u)\geq x(v)$. If $x(u)\leq x(v)$, then deleting edges $uu_2,...,uu_t$ and adding edges $vu_2,...,vu_t$ get the graph $G$, then $\lambda(G)\geq x^TA(G)x\geq x^TA(G')x=\lambda(G')$ with equality if and only if $x$ is the eigenvector of $A(G)$, but it is easy to find that $x$ is not the eigenvector of $A(G)$, so $\lambda(G')< \lambda(G)$; If $x(u)\geq x(v)$, then deleting edges $vu_1$ and adding edges $uu_1$ get the graph $G$, then $\lambda(G)\geq x^TA(G)x\geq x^TA(G')x=\lambda(G')$ with equality if and only if $x$ is the eigenvector of $A(G)$, but it is also easy to find that $x$ is not the eigenvector of $A(G)$, so $\lambda(G')< \lambda(G)$, this completes the proof.
\end{proof}

\begin{lemma}\label{L6}
Let $n=k\alpha>2\alpha$ and $G\in  \mathcal{G}_{n,\alpha}$  be  a graph obtained by joining an edge from a non-cut vertex of a graph $H\in \mathcal{G}_{n-k(l+p), \alpha-(l+p)} $ and a non-cut vertex of $P_{k(l+p),l+p}$, $H\neq K_k$ (see $Fig.3$). Let  $G'$ be the graph obtained from $G$ by deleting the edge $v_{pk}v_{p+1,1}$ and adding edge $v_{01}v_{p+1,1}$. Then $\lambda(G')> \lambda(G)$.\\ \\
\begin{picture}(60, 18)

\put(15,6){$K_k$}
\put(15,16){$V_{l+p}$}
\put(18,8){\oval(8,12)}
\put(22,8){\line(1,0){3}}
\put(22,8){\circle*{1}}
\put(26,8){$\ldots$}
\put(32,6){$K_k$}
\put(32,16){$V_{p+1}$}
\put(35,8){\oval(8,12)}
\put(39,8){\line(1,0){11}}
\put(39,8){\circle*{1}}
\put(50,8){\circle*{1}}
\put(39,5){$v_{p+1,1}$}
\put(44.5,10){$v_{pk}$}
\put(58.5,5){$v_{p1}$}
\put(61,10){$v_{p-1,k}$}
\put(79.5,5){$v_{p-1,1}$}
\put(51,6){$K_k$}
\put(51,16){$V_{p}$}
\put(54,8){\oval(8,12)}
\put(72,6){$K_k$}
\put(72,16){$V_{p-1}$}
\put(75,8){\oval(8,12)}
\put(58,8){\line(1,0){13}}
\put(79,8){\line(1,0){3}}
\put(58,8){\circle*{1}}
\put(71,8){\circle*{1}}
\put(79,8){\circle*{1}}
\put(84,8){$\ldots$}
\put(92,6){$K_k$}
\put(92,16){$V_1$}
\put(95,8){\oval(8,12)}
\put(99,8){\line(3,1){11}}
\put(99,8){\circle*{1}}
\put(110,11.5){\circle*{1}}
\put(91,8){\circle*{1}}
\put(110,5){\circle*{1}}
\put(107,14){$v_{0k}$}
\put(85.5,10){$v_{1k}$}
\put(99,5){$v_{11}$}
\put(107,1){$v_{01}$}
\put(111,6){$K_k$}
\put(111,16){$V_0$}
\put(106,22){$.............................$}
\put(106,-2.5){$\vdots$}
\put(106,2){$\vdots$}
\put(106,6){$\vdots$}
\put(106,10){$\vdots$}
\put(106,14){$\vdots$}
\put(106,18){$\vdots$}
\put(106,-2.5){$............................$}
\put(138,-2.5){$\vdots$}
\put(138,2){$\vdots$}
\put(138,6){$\vdots$}
\put(138,10){$\vdots$}
\put(138,14){$\vdots$}
\put(138,18){$\vdots$}

\put(114,8){\oval(8,12)}
\put(116,8){\line(1,0){7}}
\put(116,10){\line(1,0){7}}
\put(116,6){\line(1,0){7}}
\put(128,8){\circle{15}}
\put(128,18){$H$}
\put(139,6){$G$}
\end{picture}\\ \\
\begin{picture}(60,24)
\put(32,-4){$K_k$}
\put(35,-2){\oval(8,12)}
\put(39,-2){\line(1,0){11}}
\put(39,-2){\circle*{1}}
\put(50,-2){\circle*{1}}
\put(51,-4){$K_k$}
\put(32,6){$V_{l+p}$}
\put(51,6){$V_{l+p-1}$}
\put(54,-2){\oval(8,12)}
\put(58,-2){\line(1,0){4}}
\put(58,-2){\circle*{1}}
\put(64,-2){$\ldots$}
\put(73,-4){$K_k$}
\put(73,6){$V_{p+1}$}
\put(76,-2){\oval(8,12)}
\put(80,-2){\line(3,2){11}}
\put(80,-2){\circle*{1}}
\put(32,16){$K_k$}
\put(32,26){$V_{p}$}
\put(35,18){\oval(8,12)}
\put(51,16){$K_k$}
\put(51,26){$V_{p-1}$}
\put(54,18){\oval(8,12)}
\put(39,18){\line(1,0){11}}
\put(58,18){\line(1,0){4}}
\put(39,18){\circle*{1}}
\put(50,18){\circle*{1}}
\put(58,18){\circle*{1}}
\put(65,18){$\ldots$}
\put(73,15.5){$K_k$}
\put(73,26){$V_1$}
\put(76,18){\oval(8,12)}
\put(80,18){\line(3,-2){11}}
\put(80,18){\circle*{1}}
\put(72,18){\circle*{1}}
\put(31,18){\circle*{1}}
\put(91,5.2){\circle*{1}}
\put(91,10.7){\circle*{1}}
\put(92,6){$K_k$}
\put(92,16){$V_0$}
\put(25.5,20){$v_{pk}$}
\put(39,15){$v_{p1}$}
\put(80,-5){$v_{p+1,1}$}
\put(40,20){$v_{p-1,k}$}
\put(58,15){$v_{p-1,1}$}
\put(66.5,20){$v_{1k}$}
\put(80,20){$v_{11}$}
\put(87,14){$v_{0k}$}
\put(88,1){$v_{01}$}
\put(85,22){$.............................$}
\put(85,-2.5){$\vdots$}
\put(85,2){$\vdots$}
\put(85,6){$\vdots$}
\put(85,10){$\vdots$}
\put(85,14){$\vdots$}
\put(85,18){$\vdots$}
\put(85,-2.5){$............................$}
\put(117,-2.5){$\vdots$}
\put(117,2){$\vdots$}
\put(117,6){$\vdots$}
\put(117,10){$\vdots$}
\put(117,14){$\vdots$}
\put(117,18){$\vdots$}

\put(95,8){\oval(8,12)}

\put(97,10){\line(1,0){7}}
\put(97,6){\line(1,0){7}}
\put(109,8){\circle{15}}
\put(109,18){$H$}
\put(120,6){$G'$}
\put(69,-15){$Fig.3$}
\end{picture}\\ \\
\end{lemma}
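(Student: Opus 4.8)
The crucial observation is that the edge $v_{pk}v_{p+1,1}$ is a bridge of $G$: deleting it detaches the far tail $F:=P_{kl,l}$ spanned by $V_{p+1},\dots,V_{l+p}$ from the rest $M$, where $M$ consists of $H$ together with the near tail $V_1-\cdots-V_p$ rooted at $v_{0k}\in V_0$. Both $G$ and $G'$ arise from the disjoint union $M\sqcup F$ by adding a single edge from the fixed vertex $u:=v_{p+1,1}$ of $F$ to a vertex $w$ of $M$; write $G_w$ for this graph, so that $G=G_{v_{pk}}$ and $G'=G_{v_{01}}$. The whole statement is thus about how $\lambda(G_w)$ depends on the attachment vertex $w$, and the comparison to carry out is between $v_{pk}$ and $v_{01}$ \emph{inside $M$}.

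I would avoid the one-step Rayleigh estimate used in Lemma~\ref{L2}: with the Perron vector $x$ of $G$ it would require $x(v_{01})>x(v_{pk})$, and this can fail. Indeed, if $H$ is itself a clique path then $G$ is a symmetric clique path, the far tail inflates the coordinate at $v_{pk}$, and one has $x(v_{pk})>x(v_{01})$, even though $\lambda(G')>\lambda(G)$ is still true. Instead I would invoke the bridge identity $\phi_{M\sqcup F+wu}=\phi_M\phi_F-\phi_{M-w}\phi_{F-u}$, which shows that for $t>\max\{\lambda(M),\lambda(F)\}$ the spectral radius of $G_w$ is the unique root of the secular equation $g_M(w,t)\,g_F(u,t)=1$, where $g_K(y,t):=[(tI-A(K))^{-1}]_{yy}=\sum_{s\ge0}\sigma_s^K(y)\,t^{-s-1}$ is the closed-walk generating function of $K$ at $y$. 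Both factors are positive and strictly decreasing in $t$ on this range, and $g_F(u,\cdot)$ is common to $G$ and $G'$; hence raising $g_M(w,\cdot)$ raises the root, and $\lambda(G')>\lambda(G)$ reduces to the inequality $\sigma_s^M(v_{01})\ge\sigma_s^M(v_{pk})$ for all $s$, strict for at least one $s$.

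This walk-domination inside $M$ is the heart of the matter. The reflection of the clique path $V_0-\cdots-V_p$ that swaps $V_i$ with $V_{p-i}$ is an automorphism of it carrying the free vertex $v_{pk}$ of $V_p$ to the free vertex $v_{01}$ of $V_0$; so, exactly as in the injection of Lemma~\ref{L2}, it matches bijectively the closed walks based at $v_{pk}$ that stay inside $V_0\cup\cdots\cup V_p$ with those based at $v_{01}$, and these contribute equally. The surplus comes from walks entering $H\setminus V_0$, and $H\neq K_k$ is exactly what makes such walks exist --- when $H=K_k$, $M$ is a symmetric clique path, the two counts coincide for every $s$, and indeed $\lambda(G)=\lambda(G')$. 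The main obstacle is that this reflection is not an automorphism of all of $M$, since the vertices of $V_0$ carry edges into $H\setminus V_0$ that $V_p$ lacks; it breaks precisely on the walks from $v_{pk}$ that themselves reach $H\setminus V_0$, and these have to be injected into the corresponding walks at $v_{01}$ by hand --- cutting each walk at its crossings of the unique tail-to-$V_0$ edge $v_{11}v_{0k}$, reflecting the pure-tail excursions while anchoring the $H$-excursions at $v_{0k}$. Making this injection precise, and verifying that it misses at least one walk, is the step I expect to demand the most care.
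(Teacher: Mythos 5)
Your reduction framework is sound and genuinely different from the paper's argument. The bridge identity $\phi_{G_w}=\phi_M\phi_F-\phi_{M-w}\phi_{F-u}$, the secular equation $g_M(w,t)\,g_F(u,t)=1$ with both factors positive and strictly decreasing for $t>\max\{\lambda(M),\lambda(F)\}$, and the observation that $g_F(u,\cdot)$ is common to $G$ and $G'$ correctly reduce $\lambda(G')>\lambda(G)$ to the single inequality $g_M(v_{01},t)>g_M(v_{pk},t)$, for which closed-walk domination in $M$ is sufficient. Your diagnosis of why a one-step Rayleigh comparison fails is also accurate: the paper is indeed forced into a Case~2 (where $x(v_{01})<x(v_{pk})$) and a long cascade of subcases in which a test vector $y$ is built by permuting Perron coordinates between $V_0$ and $V_p$ (sometimes reflecting the entire tail), after which one checks $y^TA(G')y\ge x^TA(G)x$ and that $y$ cannot be an eigenvector of $A(G')$. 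So the two routes really are different: the paper trades the walk combinatorics for explicit coordinate-swapped test vectors and a case analysis on the relative order of $x(v_{01}),x(v_{pk}),x(v_{0k}),x(v_{p1}),x(v_{p2})$ and the $x(v_{0j})$; your route concentrates all the difficulty into one clean combinatorial statement about $M$ alone.

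That statement, however, is exactly where the proof is incomplete, and the injection you sketch does not work as described. A closed walk at $v_{pk}$ that meets $H\setminus V_0$ decomposes into maximal segments inside $P:=V_0\cup\cdots\cup V_p$ separated by excursions into $H\setminus V_0$; each excursion departs from and returns to vertices of $V_0$ that need not be $v_{0k}$ and need not coincide, since $V_0$ generally carries several edges into $H\setminus V_0$ (the edge $v_{11}v_{0k}$ is the unique tail-to-$V_0$ edge, not the unique $V_0$-to-$(H\setminus V_0)$ edge). Reflecting the $P$-segments sends their $V_0$-endpoints into $V_p$, where no excursion can be reattached, and ``anchoring the $H$-excursions at $v_{0k}$'' alters their endpoints and forfeits injectivity. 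So the claim $\sigma_s^M(v_{01})\ge\sigma_s^M(v_{pk})$ for all $s$, strict for some $s$ --- which in your formulation is the entire content of the lemma --- is asserted rather than proved, and it is not a routine verification. A workable repair within your framework is to apply the bridge identity a second time at the cut edge $v_{0k}v_{11}$, expressing $g_M(v_{pk},t)$ and $g_M(v_{01},t)$ through the walk generating functions of $H$ and of the standalone clique path $T=V_1\cdots V_p$, and to compare those directly (or to induct on $p$); until that step is carried out the argument has a genuine gap.
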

\begin{proof}
Let $x$ be the Perron vector of $G$, by Theorem ~\ref{T1}, it is easy to get that the values of vertices with the same degree in the clique of order $k$ in $G-H$ are same. Let $V_{i}=\{v_{i1},v_{i2},...,v_{ik}\},i=0,1,...,l+p$. Thus $x(v_{i2})= x(v_{i3})= ... = x(v_{i,k-1}),i=1,2,...,p$, and without loss of generality, assume that $x(v_{02})\leq x(v_{03})\leq ... \leq x(v_{0,k-1})$. Next we will consider the following cases:\\
{\bf Case 1:} If $x(v_{01})\geq x(v_{pk})$,  then we delete the edge $v_{pk}v_{p+1,1}$ and add edge $v_{01}v_{p+1,1}$, then $\lambda(G)= x^TA(G)x= x^TA(G')x-2(x(v_{01})-x(v_{pk}))x_{p+1,1}\le \lambda(G')$ with equality holding if and only if $x$ is an eigenvector of $A(G')$. However, $(A(G')x)_{v_{pk}}=\sum_{i=1}^{k-1}x(v_{pi})<\lambda(G)x(v_{pk})$, so $x$ is not an eigenvector of $A(G')$. Thus $\lambda(G')>\lambda(G)$.\\
{\bf Case 2:} If $x(v_{01})< x(v_{pk})$, then we delete the edge $v_{pk}v_{p+1,1}$ and add edge $v_{01}v_{p+1,1}$. Next we will consider the following subcases:

 {\bf Subcase 2.1:} Suppose that $x(v_{02})\ge x(v_{p2})$. Let
 \begin{eqnarray*}
&& V_{01}=N_G(v_{01})\\
&&V_{0k}=N_G(v_{0k})\backslash \{v_{11},v_{01}\}\\
&&E_1= \{v_{pk}v_{p+1,1}\}\cup\{v_{01}w|w\in V_{01}\}\cup \{v_{0k}w|w\in V_{0k}\}\cup\{v_{pk}v_{pj}|j=1,2,..,k-1\}\\
&&E_2=\{uv\in E(G)| u,v \in \cup_{i=1}^{p-1}V_i\}\cup\{v_{0k}v_{11},v_{p-1,k}v_{p1}\}.
\end{eqnarray*}
For this case, it is divided into the following two subcases:

 {\bf Subcase 2.1.1:} If $x(v_{0k})\ge x(v_{p1})$, then let $y$ be a vector defined by the following:
\begin{displaymath}
y(u) = \left\{ \begin{array}{ll}
x(v_{01}), & \textrm{ $u=v_{pk}$}\\
x(v_{pk}),& \textrm{ $u=v_{01}$}\\
x(u),& \textrm{ otherwise}
\end{array} \right..
\end{displaymath}
Then
 \begin{eqnarray*}
&&y^TA(G')y-x^TA(G)x\\
&=&y^TA(G')y-y^TA(G)y+y^TA(G)y-x^TA(G)x\\
&=&2[y(v_{01})y(v_{p+1,1})-y(v_{pk})y(v_{p+1,1})]+y^TA(G)y-x^TA(G)x\\
&=&2[x(v_{pk})x(v_{p+1,1})-x(v_{01})x(v_{p+1,1})]+y^TA(G)y-x^TA(G)x.
\end{eqnarray*}
By the definition of $Y$, it implies that
\begin{displaymath}
y(u)y(v) = \left\{ \begin{array}{ll}
x(u)x(v), & \textrm{ $uv\notin E_1$}\\
x(v_{pk})x(v),& \textrm{ $u=v_{01},v\in V_{01}$}\\
x(u)x(v),& \textrm{ $u=v_{0k},v\in V_{0k}$}\\
x(v_{01})x(v),& \textrm{ $u=v_{pk},v=v_{pj},2\le j\le k-1$}\\
\end{array} \right.,
\end{displaymath}
so
 \begin{eqnarray*}
&&y^TA(G)y-x^TA(G)x\\
&=&2\sum_{uv\in E_1}[y(u)y(v)-x(u)x(v)]+2\sum_{uv\notin E_1}[y(u)y(v)-x(u)x(v)]\\
&=&2[y(v_{pk})y(v_{p+1,1})-x(v_{pk})x(v_{p+1,1})]+2\sum_{w\in V_{01}}[y(v_{01})y(w)-x(v_{01})x(w)]\\
&+&\sum_{w\in V_{0k}}2[y(v_{0k})y(w)-x(v_{0k})x(w)]+\sum_{j=1,...,k-1}2[y(v_{pk})y(v_{pj})-x(v_{pk})x(v_{pj})]\\
&=&2[x(v_{01})-x(v_{pk})]x(v_{p+1,1})+2\sum_{w\in V_{01}}[x(v_{pk})-x(v_{01})]x(w)\\
&-&2\sum_{j=1,...,k-1}[x(v_{pk})-x(v_{01})]x(v_{pj}),
\end{eqnarray*}
then
 \begin{eqnarray*}
&&y^TA(G')y-x^TA(G)x\\
&=&2[y(v_{01})y(v_{p+1,1})-y(v_{pk})y(v_{p+1,1})]+y^TA(G)y-x^TA(G)x\\
&=&2[x(v_{pk})x(v_{p+1,1})-x(v_{01})x(v_{p+1,1})]+2[x(v_{01})-x(v_{pk})]x(v_{p+1,1})\\
&+& 2\sum_{w\in V_{01}}[x(v_{pk})-x(v_{01})]x(w)-2\sum_{j=1,...,k-1}[x(v_{pk})-x(v_{01})]x(v_{pj})\\
&=&2\sum_{w\in V_{01}}[x(v_{pk})-x(v_{01})]x(w)-2\sum_{j=1,...,k-1}[x(v_{pk})-x(v_{01})]x(v_{pj})\\
&\ge&2\sum_{j=2,...,k-1}[x(v_{pk})-x(v_{01})][x(v_{0j})-x(v_{pj})]+[x(v_{pk})-x(v_{01})][x(v_{0k})-x(v_{p1})].
\end{eqnarray*}
By $x(v_{pk})>x(v_{01}),x(v_{0,k-1})\ge ... \ge x(v_{02})\ge x(v_{p2})=...=x(v_{p,k-1})$ and $x(v_{0k})\ge x(v_{p1})$, so $\lambda(G')\ge y^TA(G')y\ge x^TA(G)x=\lambda(G)$ with equality if and only if $y$ is the Perron vector of $A(G')$. However,
\begin{eqnarray*}
(A(G')y)_{v_{p2}}&=&\sum_{j=1,3,4,..,k}y(v_{pj})=y(v_{pk})+\sum_{j=1,3,4,..,k-1}y(v_{pj})\\
&=&x(v_{01})+\sum_{j=1,3,4,..,k-1}x(v_{pj})\\
&<&x(v_{pk})+\sum_{j=1,3,4,..,k-1}x(v_{pj})\\
&=&\lambda(G)x(v_{p2})=\lambda(G)y(v_{p2}).
\end{eqnarray*}
So $y$ is not the Perron vector of $A(G')$, thus $\lambda(G')>\lambda(G)$.

{\bf Subcase 2.1.2:} If $x(v_{0k})< x(v_{p1})$, then let $y$ be a vector defined by the following:
\begin{displaymath}
y(u) = \left\{ \begin{array}{ll}
x(v_{p-i,k+1-j}), & \textrm{ $u=v_{ij},i\in \{0,1,...,p\},j\in\{1,k\}$}\\
x(v_{p-i,j}), & \textrm{ $u=v_{ij},i=1,2,...,p-1;j=2,3,...,k-1$}\\
x(w),& \textrm{ otherwise}
\end{array} \right..
\end{displaymath}
Then
 \begin{eqnarray*}
&&y^TA(G')y-x^TA(G)x\\
&=&y^TA(G')y-y^TA(G)y+y^TA(G)y-x^TA(G)x\\
&=&2[y(v_{01})y(v_{p+1,1})-y(v_{pk})y(v_{p+1,1})]+y^TA(G)y-x^TA(G)x\\
&=&2[x(v_{pk})-x(v_{01})]x(v_{p+1,1})+y^TA(G)y-x^TA(G)x.
\end{eqnarray*}
By the definition of $y$, it implies that
\begin{displaymath}
y(u)y(v) = \left\{ \begin{array}{ll}
x(u)x(v), & \textrm{ $uv\notin E_1\cup E_2\cup\{v_{p1}v_{pj}:j=2,...,k-1\}$}\\
x(v_{pk})x(v),& \textrm{ $u=v_{01},v\in V_{01}$}\\
x(v_{p1})x(v),& \textrm{ $u=v_{0k},v\in V_{0k}$}\\
x(v_{01})x(v),& \textrm{ $u=v_{pk},v=v_{pj},1\le j\le k-1$}
\end{array} \right.,
\end{displaymath}
so
 \begin{eqnarray*}
&&y^TA(G)y-x^TA(G)x\\
&=&2\sum_{uv\in E_1 \cup E_2}[y(u)y(v)-x(u)x(v)]+2\sum_{uv\notin E_1 \cup E_2}[y(u)y(v)-x(u)x(v)]\\
&=&2[y(v_{pk})y(v_{p+1,1})-x(v_{pk})x(v_{p+1,1})]+2\sum_{w\in V_{01}}[y(v_{01})y(w)-x(v_{01})x(w)]\\
&+&\sum_{w\in V_{0k}}2[y(v_{0k})y(w)-x(v_{0k})x(w)]+\sum_{j=1,...,k-1}2[y(v_{pk})y(v_{pj})-x(v_{pk})x(v_{pj})]\\
&+&2\sum_{uv\in E_2}[y(u)y(v)-x(u)x(v)]+2\sum_{j=2}^{k-1}[y(v_{p1})y(v_{pj})-x(v_{p1})x(v_{pj})]\\
&=&2[x(v_{01})x(v_{p+1,1})-x(v_{pk})x(v_{p+1,1})]+2\sum_{w\in V_{01}}[x(v_{pk})y(w)-x(v_{01})x(w)]\\
&+&\sum_{w\in V_{0k}}2[x(v_{p1})x(w)-x(v_{0k})x(w)]+\sum_{j=1,...,k-1}2[x(v_{01})y(v_{pj})-x(v_{pk})x(v_{pj})]\\
&+&2\sum_{uv\in E_2}[y(u)y(v)-x(u)x(v)]+2\sum_{j=2}^{k-1}[x(v_{0k})x(v_{pj})-x(v_{p1})x(v_{pj})]\\
&=&2[x(v_{01})-x(v_{pk})]x(v_{p+1,1})+2\sum_{w\in V_{01}\backslash\{v_{0k}\}}[x(v_{pk})y(w)-x(v_{01})x(w)]\\
&+&\sum_{w\in V_{0k}}2[x(v_{p1})x(w)-x(v_{0k})x(w)]+\sum_{j=2,...,k-1}2[x(v_{01})y(v_{pj})-x(v_{pk})x(v_{pj})]\\
&+&2\sum_{uv\in E_2}[y(u)y(v)-x(u)x(v)]+2[x(v_{pk})y(v_{0k})-x(v_{01})x(v_{0k})]\\
&+&2[x(v_{01})y(v_{p1})-x(v_{pk})x(v_{p1})]+2\sum_{j=2}^{k-1}[x(v_{0k})-x(v_{p1})]x(v_{pj})\\
&=&2[x(v_{01})-x(v_{pk})]x(v_{p+1,1})+2\sum_{w\in V_{01}\backslash\{v_{0k}\}}[x(v_{pk})x(w)-x(v_{01})x(w)]\\
&+&\sum_{w\in V_{0k}}2[x(v_{p1})x(w)-x(v_{0k})x(w)]+\sum_{j=2,...,k-1}2[x(v_{01})x(v_{pj})-x(v_{pk})x(v_{pj})]\\
&+&2\sum_{uv\in E_2}[y(u)y(v)-x(u)x(v)]+2[x(v_{pk})x(v_{p1})-x(v_{01})x(v_{0k})]\\
&+&2[x(v_{01})x(v_{0k})-x(v_{pk})x(v_{p1})]+2\sum_{j=2}^{k-1}[x(v_{0k})-x(v_{p1})]x(v_{pj})\\
&=&2[x(v_{01})-x(v_{pk})]x(v_{p+1,1})+2\sum_{w\in V_{01}\backslash\{v_{0k}\}}[x(v_{pk})-x(v_{01})]x(w)\\
&+&\sum_{w\in V_{0k}}2[x(v_{p1})-x(v_{0k})]x(w)+\sum_{j=2,...,k-1}2[x(v_{01})-x(v_{pk})]x(v_{pj})\\
&+&2\sum_{uv\in E_2}[y(u)y(v)-x(u)x(v)]+2\sum_{j=2}^{k-1}[x(v_{0k})-x(v_{p1})]x(v_{pj}),
\end{eqnarray*}
By the definition of $y$, it also implies that
\begin{eqnarray*}
&&\sum_{i=1,...,p-1}\sum_{1=j_1< j_2<k}\left\{[y(v_{ij_1})y(v_{ij_2})-x(v_{ij_1})x(v_{ij_2})]+[y(v_{p-i,j_1})y(v_{p-i,j_2})-x(v_{p-i,j_1})x(v_{p-i,j_2})]\right\}\\
&+&\sum_{i=1,...,p-1}\sum_{1<j_1< j_2=k}\left\{[y(v_{ij_1})y(v_{ij_2})-x(v_{ij_1})x(v_{ij_2})]+[y(v_{p-i,j_1})y(v_{p-i,j_2})-x(v_{p-i,j_1})x(v_{p-i,j_2})]\right\}\\
&=&\sum_{i=1,...,p-1}\sum_{1<j_1< j_2<k}\left\{[y(v_{ij_1})y(v_{ij_2})-x(v_{ij_1})x(v_{ij_2})]+[y(v_{p-i,j_1})y(v_{p-i,j_2})-x(v_{p-i,j_1})x(v_{p-i,j_2})]\right\}\\
&=&\sum_{i=1,...,p-1}\sum_{j_1=1, j_2=k}\left\{[y(v_{ij_1})y(v_{ij_2})-x(v_{ij_1})x(v_{ij_2})]+[y(v_{p-i,j_1})y(v_{p-i,j_2})-x(v_{p-i,j_1})x(v_{p-i,j_2})]\right\}\\
&=&0.
\end{eqnarray*}
So
\begin{eqnarray*}
&&2\sum_{uv\in E_2}[y(u)y(v)-x(u)x(v)]\\
&=&\sum_{i=0,...,p-1}\{[y(v_{ik})y(v_{i+1,1})-x(v_{ik})x(v_{i+1,1})]+[y(v_{p-i-1,k})y(v_{p-i,1})-x(v_{p-i-1,k})x(v_{p-i,1})]\}\\
&+&\sum_{i=1,...,p-1}\sum_{j_1< j_2}\left\{[y(v_{ij_1})y(v_{ij_2})-x(v_{ij_1})x(v_{ij_2})]+[y(v_{p-i,j_1})y(v_{p-i,j_2})-x(v_{p-i,j_1})x(v_{p-i,j_2})]\right\}\\
&=&\sum_{i=0,...,p-1}\{[x(v_{p-i,1})x(v_{p-i-1,k})-x(v_{ik})x(v_{i+1,1})]+[x(v_{i+1,1})x(v_{ik})-x(v_{p-i-1,k})x(v_{p-i,1})]\}\\
&+&\sum_{i=1,...,p-1}\sum_{1=j_1< j_2<k}\left\{[y(v_{ij_1})y(v_{ij_2})-x(v_{ij_1})x(v_{ij_2})]+[y(v_{p-i,j_1})y(v_{p-i,j_2})-x(v_{p-i,j_1})x(v_{p-i,j_2})]\right\}\\
&+&\sum_{i=1,...,p-1}\sum_{1<j_1< j_2<k}\left\{[y(v_{ij_1})y(v_{ij_2})-x(v_{ij_1})x(v_{ij_2})]+[y(v_{p-i,j_1})y(v_{p-i,j_2})-x(v_{p-i,j_1})x(v_{p-i,j_2})]\right\}\\
&+&\sum_{i=1,...,p-1}\sum_{1<j_1< j_2=k}\left\{[y(v_{ij_1})y(v_{ij_2})-x(v_{ij_1})x(v_{ij_2})]+[y(v_{p-i,j_1})y(v_{p-i,j_2})-x(v_{p-i,j_1})x(v_{p-i,j_2})]\right\}\\
&+&\sum_{i=1,...,p-1}\sum_{j_1=1, j_2=k}\left\{[y(v_{ij_1})y(v_{ij_2})-x(v_{ij_1})x(v_{ij_2})]+[y(v_{p-i,j_1})y(v_{p-i,j_2})-x(v_{p-i,j_1})x(v_{p-i,j_2})]\right\}
\\&=&0.
\end{eqnarray*}

Then
 \begin{eqnarray*}
&&y^TA(G)y-x^TA(G)x\\
&=&2[x(v_{01})-x(v_{pk})]x(v_{p+1,1})+2\sum_{w\in V_{01}\backslash\{v_{0k}\}}[x(v_{pk})-x(v_{01})]x(w)\\
&+&2\sum_{w\in V_{0k}}[x(v_{p1})-x(v_{0k})]x(w)+2\sum_{j=2,...,k-1}[x(v_{01})-x(v_{pk})]x(v_{pj})\\
&+&2\sum_{j=2}^{k-1}[x(v_{0k})-x(v_{p1})]x(v_{pj}),
\end{eqnarray*}
so
\begin{eqnarray*}
&&y^TA(G')y-x^TA(G)x\\
&=&2[x(v_{pk})-x(v_{01})]x(v_{p+1,1})+2[x(v_{01})-x(v_{pk})]x(v_{p+1,1})\\
&+&\sum_{w\in V_{01}\backslash\{v_{0k}\}}2[x(v_{pk})-x(v_{01})]x(w)+\sum_{w\in V_{0k}}2[x(v_{p1})-x(v_{0k})]x(w)\\
&+&\sum_{j=2,...,k-1}2[x(v_{01})-x(v_{pk})]x(v_{pj})+2\sum_{j=2}^{k-1}[x(v_{0k})-x(v_{p1})]x(v_{pj})\\
&=&\sum_{w\in V_{01}\backslash\{v_{0k}\}}2[x(v_{pk})-x(v_{01})]x(w)+\sum_{w\in V_{0k}}2[x(v_{p1})-x(v_{0k})]x(w)\\
&+&\sum_{j=2,...,k-1}2[x(v_{01})-x(v_{pk})]x(v_{pj})+2\sum_{j=2}^{k-1}[x(v_{0k})-x(v_{p1})]x(v_{pj})\\
&\ge&2\sum_{j=2}^{k-1}[x(v_{p1})-x(v_{0k})][x(v_{0j})-x(v_{pj})]+2\sum_{j=2,...,k-1}[x(v_{pk})-x(v_{01})][x(v_{0j}-x(v_{pj})].
\end{eqnarray*}
Since $x(v_{p1})>x(v_{0k}),x(v_{pk})>x(v_{01})$  and $x(v_{02})\ge x(v_{p2})$, so $\lambda(G')\ge y^TA(G')y\ge x^TA(G)x=\lambda(G)$ with equality if and only if $y$ is the Perron vector of $A(G')$. However,
\begin{eqnarray*}
(A(G')y)_{v_{p2}}&=&\sum_{j=1,3,4,..,k}y(v_{pj})=y(v_{p1})+y(v_{pk})+\sum_{j=3,4,..,k-1}y(v_{pj})\\
&=&x(v_{0k})+x(v_{01})+\sum_{j=3,4,..,k-1}x(v_{pj})\\
&<&x(v_{p1})+x(v_{pk})+\sum_{j=3,4,..,k-1}x(v_{pj})\\
&=&\lambda(G)x(v_{p2})=\lambda(G)y(v_{p2}).
\end{eqnarray*}
So $y$ is not the Perron vector of $A(G')$, thus $\lambda(G')>\lambda(G)$.

{\bf Subcase 2.2:} If there exists a integer $t$ satisfies that $2<t<k-1$ and $x(v_{0t})<x(v_{p2})\le x(v_{0,t+1})$. Let
 \begin{eqnarray*}
V_{01}'&=&\left(\bigcup_{i=1,...,t}N_G(v_{0i})\right) \Big{\backslash}\Big{\{}v_{0i}\Big{\}}_{i=1}^t\\
V_{0k}'&=&\{w|w\sim v_{0k},w\in V(G),w\notin \{v_{11},v_{01},...,v_{0t}\}\}=N_G(v_{0k})\backslash \{v_{11},v_{01},...,v_{0t}\}\\
E_1'&= &\{v_{pk}v_{p+1,1}\}\cup\{v_{0j}w\in E(G)|w\in V_{01}',j=1,..,t\}\cup \{v_{0k}w|w\in V_{0k}'\}\cup\\
&&\{v_{0i}v_{0j}|1\le i<j\le t\}\cup\{v_{pi}v_{pj}|i\in \{2,3,..,t,k\},i\neq j\}\\
E_2'&=&\{uv\in E(G)| u,v \in \cup_{i=1}^{k-1}V_i\}\cup\{v_{0k}v_{11},v_{p-1,k}v_{p1}\}=E_2.
\end{eqnarray*}
Then it is divided into the following two cases.\\
{\bf Subcase 2.2.1:} If $x(v_{0k})\ge x(v_{p1})$, then let $y$ be a vector defined by the following:
\begin{displaymath}
y(u) = \left\{ \begin{array}{ll}
x(v_{01}), & \textrm{ $u=v_{pk}$}\\
x(v_{pk}),& \textrm{ $u=v_{01}$}\\
x(v_{p-i,j}),& \textrm{ $u=v_{ij},i\in \{0,p\},j=2,...,t$}\\
x(u),& \textrm{ otherwise}
\end{array} \right..
\end{displaymath}
Then
 \begin{eqnarray*}
&&y^TA(G')y-x^TA(G)x\\
&=&y^TA(G')y-y^TA(G)y+y^TA(G)y-x^TA(G)x\\
&=&2[y(v_{01})y(v_{p+1,1})-y(v_{pk})y(v_{p+1,1})]+y^TA(G)y-x^TA(G)x\\
&=&2[x(v_{pk})x(v_{p+1,1})-x(v_{01})x(v_{p+1,1})]+y^TA(G)y-x^TA(G)x.
\end{eqnarray*}
By the definition of $ y $, it implies that $y(u)y(v) =x(u)x(v)$, if $uv\notin E_1'$. Then
\begin{eqnarray*}
&&y^TA(G)y-x^TA(G)x\\
&=&2\sum_{uv\in E_1'}[y(u)y(v)-x(u)x(v)]+2\sum_{uv\notin E_1'}[y(u)y(v)-x(u)x(v)]\\
&=&2[y(v_{pk})y(v_{p+1,1})-x(v_{pk})x(v_{p+1,1})]+2\sum_{j=1}^t\sum_{w\sim v_{0j},w\in V_{01}'}[y(v_{0j})y(w)-x(v_{0j})x(w)]\\
&+&\sum_{w\in V_{0k}'}2[y(v_{0k})y(w)-x(v_{0k})x(w)]+\sum_{1\le i<j\le t}2[y(v_{0i})y(v_{0j})-x(v_{0i})x(v_{0j})]\\
&+&\sum_{i\in \{2,3,..,t,k\}, j\notin \{2,3,..,t,k\}}2[y(v_{pi})y(v_{pj})-x(v_{pi})x(v_{pj})]\\
&+&\sum_{i,j\in \{2,3,..,t,k\},i< j}2[y(v_{pi})y(v_{pj})-x(v_{pi})x(v_{pj})]\\
&=&2[x(v_{01})-x(v_{pk})]x(v_{p+1,1})+2\sum_{j=2}^t\sum_{w\sim v_{0j},w\in V_{01}'}[y(v_{0j})y(w)-x(v_{0j})x(w)]\\
&+&\sum_{w\in V_{0k}'}2[x(v_{0k})x(w)-x(v_{0k})x(w)]+\sum_{2\le i<j\le t}2[x(v_{pi})x(v_{pj})-x(v_{0i})x(v_{0j})]\\
&+&\sum_{i=2}^t\sum_{j\in \{1,t+1,t+2,...,k-1\}}2[y(v_{pi})y(v_{pj})-x(v_{pi})x(v_{pj})]\\
&+&\sum_{2\le i< j\le t}2[x(v_{0i})x(v_{0j})-x(v_{pi})x(v_{pj})]+\sum_{w\in N_G(v_{01})}2[y(v_{01})y(w)-x(v_{01})x(w)]\\
&+&\sum_{w\in N_G(v_{pk})\backslash\{v_{p+1,1}\}}2[y(v_{pk})y(w)-x(v_{pk})x(w)]\\
&=&2[x(v_{01})-x(v_{pk})]x(v_{p+1,1})+2\sum_{i=2}^t\sum_{w\sim v_{0i},w\in V_{01}'}[x(v_{pi})x(w)-x(v_{0i})x(w)]\\
&+&\sum_{i=2}^t\sum_{j\in \{1,t+1,t+2,...,k-1\}}2[x(v_{0i})x(v_{pj})-x(v_{pi})x(v_{pj})]\\
&+&\sum_{i=2}^t2[y(v_{01})y(v_{0i})-x(v_{01})x(v_{0i})]+\sum_{i=2}^t2[y(v_{pk})y(v_{pi})-x(v_{pk})x(v_{pi})]\\
&+&\sum_{w\in N_G(v_{01})\backslash \{v_{02},...,v_{0t}\}}2[y(v_{01})y(w)-x(v_{01})x(w)]\\
&+&\sum_{w\in N_G(v_{pk})\backslash\{v_{p+1,1},v_{p2},...,v_{pt}\}}2[y(v_{pk})y(w)-x(v_{pk})x(w)]\\
&=&2[x(v_{01})-x(v_{pk})]x(v_{p+1,1})+2\sum_{i=2}^t\sum_{w\sim v_{0i},w\in V_{01}'}[x(v_{pi})-x(v_{0i})]x(w)\\
&+&\sum_{i=2}^t\sum_{j\in \{1,t+1,t+2,...,k-1\}}2[x(v_{0i})-x(v_{pi})]x(v_{pj})\\
&+&\sum_{i=2}^t2[x(v_{pk})x(v_{pi})-x(v_{01})x(v_{0i})]+\sum_{i=2}^t2[x(v_{01})x(v_{0i})-x(v_{pk})x(v_{pi})]\\
&+&\sum_{w\in N_G(v_{01})\backslash \{v_{02},...,v_{0t}\}}2[x(v_{pk})x(w)-x(v_{01})x(w)]\\
&+&\sum_{w\in N_G(v_{pk})\backslash\{v_{p+1,1},v_{p2},...,v_{pt}\}}2[x(v_{01})x(w)-x(v_{pk})x(w)]\\
&=&2[x(v_{01})-x(v_{pk})]x(v_{p+1,1})+2\sum_{i=2}^t\sum_{w\sim v_{0i},w\in V_{01}'}[x(v_{pi})-x(v_{0i})]x(w)\\
&+&\sum_{i=2}^t\sum_{j\in \{1,t+1,t+2,...,k-1\}}2[x(v_{0i})-x(v_{pi})]x(v_{pj})\\
&+&\sum_{w\in N_G(v_{01})\backslash \{v_{02},...,v_{0t},v_{0k}\}}2[x(v_{pk})-x(v_{01})]x(w)+2[x(v_{pk})-x(v_{01})]x(v_{0k})\\
&+&\sum_{w\in N_G(v_{pk})\backslash\{v_{p+1,1},v_{p1},v_{p2},...,v_{pt}\}}2[x(v_{01})-x(v_{pk})]x(w)+2[x(v_{01})-x(v_{pk})]x(v_{p1})\\
\end{eqnarray*}
Since $x(v_{p1})\le x(v_{0k}),x(v_{pk})>x(v_{01})$, $x(v_{0t})< x(v_{p2})\le x(v_{0,t+1})$ and $H\neq K_k$, then
\begin{eqnarray*}
&&y^TA(G)y-x^TA(G)x\\
&\ge&2[x(v_{01})-x(v_{pk})]x(v_{p+1,1})+2\sum_{i=2}^t\sum_{w\sim v_{0j},w\in V_{01}'}[x(v_{pi})-x(v_{0i})]x(w)\\
&+&\sum_{i=2}^t\sum_{j\in \{1,t+1,t+2,...,k-1\}}2[x(v_{0i})-x(v_{pi})]x(v_{pj})\\
&+&\sum_{i=t+1}^{k-1}2[x(v_{pk})-x(v_{01})][x(v_{0i})-x(v_{pi})]+2[x(v_{pk})-x(v_{01})][x(v_{0k})-x(v_{p1})]\\
&\ge&2[x(v_{01})-x(v_{pk})]x(v_{p+1,1})+\sum_{i=2}^t\sum_{j=t+1}^{k-1}2[x(v_{pi})-x(v_{0i})][x(v_{0j})-x(v_{pj})]\\
&+&\sum_{i=2}^t2[x(v_{pi})-x(v_{0i})]x(v_{0k})+\sum_{i=2}^t2[x(v_{0i})-x(v_{pi})]x(v_{p1})\\
&+&\sum_{i=t+1}^{k-1}2[x(v_{pk})-x(v_{01})][x(v_{0i})-x(v_{pi})]+2[x(v_{pk})-x(v_{01})][x(v_{0k})-x(v_{p1})]\\
&=&2[x(v_{01})-x(v_{pk})]x(v_{p+1,1})+\sum_{i=2}^t\sum_{j=t+1}^{k-1}2[x(v_{pi})-x(v_{0i})][x(v_{0j})-x(v_{pj})]\\
&+&\sum_{i=2}^t2[x(v_{pi})-x(v_{0i})][x(v_{0k})-x(v_{p1})]+\sum_{i=t+1}^{k-1}2[x(v_{pk})-x(v_{01})][x(v_{0i}-x(v_{pi})]\\
&+&2[x(v_{pk})-x(v_{01})][x(v_{0k})-x(v_{p1})],
\end{eqnarray*}
thus
 \begin{eqnarray*}
&&y^TA(G')y-x^TA(G)x\\
&=&2[y(v_{01})y(v_{p+1,1})-y(v_{pk})y(v_{p+1,1})]+y^TA(G)y-x^TA(G)x\\
&\ge&2[x(v_{pk})x(v_{p+1,1})-x(v_{01})x(v_{p+1,1})]+2[x(v_{01})-x(v_{pk})]x(v_{p+1,1})\\
&+&\sum_{i=2}^t\sum_{j=t+1}^{k-1}2[x(v_{pi})-x(v_{0i})][x(v_{0j})-x(v_{pj})]+\sum_{i=2}^t2[x(v_{pi})-x(v_{0i})][x(v_{0k})-x(v_{p1})]\\
&+&\sum_{i=t+1}^{k-1}2[x(v_{pk})-x(v_{01})][x(v_{0i})-x(v_{pi})]+2[x(v_{pk})-x(v_{01})][x(v_{0k})-x(v_{p1})]\\
&=&\sum_{i=2}^t\sum_{j=t+1}^{k-1}2[x(v_{pi})-x(v_{0i})][x(v_{0j})-x(v_{pj})]+\sum_{i=2}^t2[x(v_{pi})-x(v_{0i})][x(v_{0k})-x(v_{p1})]\\
&+&\sum_{i=t+1}^{k-1}2[x(v_{pk})-x(v_{01})][x(v_{0i})-x(v_{pi})]+2[x(v_{pk})-x(v_{01})][x(v_{0k})-x(v_{p1})]\\
&\ge& 0.
\end{eqnarray*}
So $\lambda(G')\ge y^TA(G')y\ge x^TA(G)x=\lambda(G)$ with equality if and only if $y$ is the Perron vector of $A(G')$. However,
\begin{eqnarray*}
(A(G')y)_{v_{p,k-1}}&=&\sum_{j=1,2,..,k-2,k}y(v_{pj})=y(v_{pk})+y(v_{p1})+\sum_{j=2,..,t}y(v_{pj})+\sum_{j=t+1,..,k-2}y(v_{pj})\\
&=&x(v_{01})+x(v_{p1})+\sum_{j=2,..,t}x(v_{0j})+\sum_{j=t+1,..,k-2}x(v_{pj})\\
&<&x(v_{pk})+x(v_{p1})+\sum_{j=2,..,t}x(v_{pj})+\sum_{j=t+1,..,k-2}x(v_{pj})\\
&=&\lambda(G)x(v_{p,k-1})=\lambda(G)y(v_{p,k-1}).
\end{eqnarray*}
So $y$ is not the Perron vector of $A(G')$, thus $\lambda(G')>\lambda(G)$.

{\bf Subcase 2.2.2:} If $x(v_{0k})< x(v_{p1})$, then let $y$ be a vector defined by the following:
\begin{displaymath}
y(u) = \left\{ \begin{array}{ll}
x(v_{p-i,k+1-j}), & \textrm{ $u=v_{ij},i\in \{0,1,...,p\},j\in\{1,k\}$}\\
x(v_{p-i,j}), & \textrm{ $u=v_{ij},i=1,2,...,p-1;j=2,3,...,k-1$}\\
x(v_{p-i,j}),& \textrm{ $u=v_{ij},i\in \{0,p\},j=2,...,t$}\\
x(w),& \textrm{ otherwise}
\end{array} \right..
\end{displaymath}
Then
 \begin{eqnarray*}
&&y^TA(G')y-x^TA(G)x\\
&=&y^TA(G')y-y^TA(G)y+y^TA(G)y-x^TA(G)x\\
&=&2[y(v_{01})y(v_{p+1,1})-y(v_{pk})y(v_{p+1,1})]+y^TA(G)y-x^TA(G)x\\
&=&2[x(v_{pk})-x(v_{01})]x(v_{p+1,1})+y^TA(G)y-x^TA(G)x.
\end{eqnarray*}
For $uv\notin E_1'\cup E_2'\cup\{v_{p1}v_{pj},t<j<k\}$, $y(u)y(v)=x(u)x(v)$, by similar calculation in the {\bf Subcase 2.1.2}, then we can find that $2\sum_{uv\in E_2'}[y(u)y(v)-x(u)x(v)]=0$. Thus
 \begin{eqnarray*}
&&y^TA(G)y-x^TA(G)x\\
&=&2\sum_{uv\in E_1' \cup E_2'}[y(u)y(v)-x(u)x(v)]+2\sum_{uv\notin E_1' \cup E_2'}[y(u)y(v)-x(u)x(v)]\\
&=&2[y(v_{pk})y(v_{p+1,1})-x(v_{pk})x(v_{p+1,1})]+2\sum_{j=1}^t\sum_{w\sim v_{0j},w\in V_{01}'}[y(v_{0j})y(w)-x(v_{0j})x(w)]\\
&+&2\sum_{w\in V_{0k}'}[y(v_{0k})y(w)-x(v_{0k})x(w)]+2\sum_{1\le i<j\le t}[y(v_{0i})y(v_{0j})-x(v_{0i})x(v_{0j})]\\
&+&2\sum_{i\in \{2,3,..,t,k\}, j\notin \{2,3,..,t,k\}}[y(v_{pi})y(v_{pj})-x(v_{pi})x(v_{pj})]\\
&+&2\sum_{i,j\in \{2,3,..,t,k\},i< j}[y(v_{pi})y(v_{pj})-x(v_{pi})x(v_{pj})]+\sum_{j=t+1}^{k-1}[y(v_{p1})y(v_{pj})-x(v_{p1})x(v_{pj})]\\
&=&2[x(v_{01})-x(v_{pk})]x(v_{p+1,1})+2\sum_{j=2}^t\sum_{w\sim v_{0j},w\in V_{01}'}[y(v_{0j})y(w)-x(v_{0j})x(w)]\\
&+&2\sum_{w\in V_{0k}'}[x(v_{p1})x(w)-x(v_{0k})x(w)]+2\sum_{2\le i<j\le t}[x(v_{pi})x(v_{pj})-x(v_{0i})x(v_{0j})]\\
&+&2\sum_{i=2}^t\sum_{j\in \{1,t+1,t+2,...,k-1\}}[y(v_{pi})y(v_{pj})-x(v_{pi})x(v_{pj})]\\
&+&2\sum_{2\le i< j\le t}[x(v_{0i})x(v_{0j})-x(v_{pi})x(v_{pj})]+2\sum_{w\in N_G(v_{01})}[y(v_{01})y(w)-x(v_{01})x(w)]\\
&+&2\sum_{w\in N_G(v_{pk})\backslash\{v_{p+1,1}\}}[y(v_{pk})y(w)-x(v_{pk})x(w)]+\sum_{j=t+1}^{k-1}[x(v_{0k})x(v_{pj})-x(v_{p1})x(v_{pj})]\\
&=&2[x(v_{01})-x(v_{pk})]x(v_{p+1,1})+2\sum_{j=2}^t\sum_{w\sim v_{0j},w\in V_{01}'\backslash \{v_{0k}\}}[y(v_{0j})y(w)-x(v_{0j})x(w)]\\
&+&2\sum_{j=2}^t[y(v_{0j})y(v_{0k})-x(v_{0j})x(v_{0k})]+2\sum_{w\in V_{0k}'}[x(v_{p1})x(w)-x(v_{0k})x(w)]\\
&+&2\sum_{i=2}^t\sum_{j=t+1}^{k-1}[y(v_{pi})y(v_{pj})-x(v_{pi})x(v_{pj})]+\sum_{i=2}^t[y(v_{pi})y(v_{p1})-x(v_{pi})x(v_{p1})]\\
&+&2\sum_{w\in N_G(v_{01})}[y(v_{01})y(w)-x(v_{01})x(w)]+2\sum_{w\in N_G(v_{pk})\backslash\{v_{p+1,1}\}}[y(v_{pk})y(w)-x(v_{pk})x(w)]\\
&+&\sum_{j=t+1}^{k-1}[x(v_{0k})-x(v_{p1})]x(v_{pj})\\
&=&2[x(v_{01})-x(v_{pk})]x(v_{p+1,1})+2\sum_{j=2}^t\sum_{w\sim v_{0j},w\in V_{01}'\backslash \{v_{0k}\}}[x(v_{pj})x(w)-x(v_{0j})x(w)]\\
&+&2\sum_{j=2}^t[x(v_{pj})x(v_{p1})-x(v_{0j})x(v_{0k})]+2\sum_{w\in V_{0k}'}[x(v_{p1})x(w)-x(v_{0k})x(w)]\\
&+&2\sum_{i=2}^t\sum_{j=t+1}^{k-1}[x(v_{0i})x(v_{pj})-x(v_{pi})x(v_{pj})]+\sum_{i=2}^t[x(v_{0i})x(v_{0k})-x(v_{pi})x(v_{p1})]\\
&+&2\sum_{w\in N_G(v_{01})}[y(v_{01})y(w)-x(v_{01})x(w)]+2\sum_{w\in N_G(v_{pk})\backslash\{v_{p+1,1}\}}[y(v_{pk})y(w)-x(v_{pk})x(w)]\\
&+&\sum_{j=t+1}^{k-1}[x(v_{0k})-x(v_{p1})]x(v_{pj}).
\end{eqnarray*}
By $x(v_{p1})> x(v_{0k}),x(v_{pk})>x(v_{01})$  and $x(v_{0t})< x(v_{p2})\le x(v_{0,t+1})$, then
 \begin{eqnarray*}
&&y^TA(G)y-x^TA(G)x\\
&\ge&2[x(v_{01})-x(v_{pk})]x(v_{p+1,1})+2\sum_{j=t+1}^{k-1}[x(v_{p1})-x(v_{0k})][x(v_{0j})-x(v_{pj})]\\
&+&2\sum_{i=2}^t\sum_{j=t+1}^{k-1}[x(v_{pi})-x(v_{0i})][x(v_{0j})-x(v_{pj})]\\
&+&2\sum_{i=2}^t[y(v_{01})y(v_{0i})-x(v_{01})x(v_{0i})]+[y(v_{01})y(v_{0k})-x(v_{01})x(v_{0k})]\\
&+&2\sum_{w\in N_G(v_{01})\backslash\{v_{02},...,v_{0t},v_{0k}\}}[y(v_{01})y(w)-x(v_{01})x(w)]\\
&+&2\sum_{i=t+1}^{k-1}[y(v_{pk})y(v_{pi})-x(v_{pk})x(v_{pi})]+2\sum_{i=2}^t[y(v_{pk})y(v_{pi})-x(v_{pk})x(v_{pi})]\\
&+&[y(v_{pk})y(v_{p1})-x(v_{pk})x(v_{p1})]\\
&=&2[x(v_{01})-x(v_{pk})]x(v_{p+1,1})+2\sum_{j=t+1}^{k-1}[x(v_{p1})-x(v_{0k})][x(v_{0j})-x(v_{pj})]\\
&+&2\sum_{i=2}^t\sum_{j=t+1}^{k-1}[x(v_{pi})-x(v_{0i})][x(v_{0j})-x(v_{pj})]\\
&+&2\sum_{i=2}^t[x(v_{pk})x(v_{pi})-x(v_{01})x(v_{0i})]+[x(v_{pk})x(v_{p1})-x(v_{01})x(v_{0k})]\\\\
&+&2\sum_{w\in N_G(v_{01})\backslash\{v_{02},...,v_{0t},v_{0k}\}}[x(v_{pk})x(w)-x(v_{01})x(w)]\\
&+&2\sum_{i=t+1}^{k-1}[x(v_{01})x(v_{pi})-x(v_{pk})x(v_{pi})]+2\sum_{i=2}^t[x(v_{01})x(v_{0i})-x(v_{pk})x(v_{pi})]\\
&+&2[x(v_{01})x(v_{0k})-x(v_{pk})x(v_{p1})]\\
&\ge&2[x(v_{01})-x(v_{pk})]x(v_{p+1,1})+2\sum_{j=t+1}^{k-1}[x(v_{p1})-x(v_{0k})][x(v_{0j})-x(v_{pj})]\\
&+&2\sum_{i=2}^t\sum_{j=t+1}^{k-1}[x(v_{pi})-x(v_{0i})][x(v_{0j})-x(v_{pj})]\\
&+&2\sum_{i=t+1}^{k-1}[x(v_{pk})-x(v_{01})][x(v_{0i})-x(v_{pi})]\\
&>&2[x(v_{01})-x(v_{pk})]x(v_{p+1,1}) .
\end{eqnarray*}
Then
\begin{eqnarray*}
&&y^TA(G')y-x^TA(G)x\\
&>&2[x(v_{pk})-x(v_{01})]x(v_{p+1,1})+2[x(v_{01})-x(v_{pk})]x(v_{p+1,1})=0.
\end{eqnarray*}
Then $\lambda(G')\ge y^TA(G')y> x^TA(G)x=\lambda(G)$.

{\bf Subcase 2.3:} If $x(v_{p2})>x(v_{0,k-1})$. Let
 \begin{eqnarray*}
V_{01}''&=&\left(\bigcup_{i=1,...,k-1}N_G(v_{0i})\right) \Big{\backslash}\Big{\{}v_{0i}\Big{\}}_{i=1}^{k-1}\\
V_{0k}''&=&\{w|w\sim v_{0k},w\in V(G),w\notin \{v_{11},v_{01},...,v_{0,k-1}\}\}=N_G(v_{0k})\backslash \{v_{11},v_{01},...,v_{0,k-1}\}\\
E_1''&= &\{v_{pk}v_{p+1,1}\}\cup\{v_{0j}w\in E(G)|w\in V_{01}'',j=1,..,t\}\cup \{v_{0k}w|w\in V_{0k}''\}\cup\\
&&\{v_{0i}v_{0j}|1\le i<j\le k-1\}\cup\{v_{pi}v_{pj}|i\in \{2,3,..,k\},i\neq j\}\\
E_2''&=&\{uv\in E(G)| u,v \in \cup_{i=1}^{p-1}V_i\}\cup\{v_{0k}v_{11},v_{p-1,k}v_{p1}\}=E_2.
\end{eqnarray*}
Then it is divided into the following two subcases.

{\bf Subcase 2.3.1:} If $x(v_{0k})\ge x(v_{p1})$, then let $y$ be a vector defined by the following:
\begin{displaymath}
y(u) = \left\{ \begin{array}{ll}
x(v_{01}), & \textrm{ $u=v_{pk}$}\\
x(v_{pk}),& \textrm{ $u=v_{01}$}\\
x(v_{p-i,j}),& \textrm{ $u=v_{ij},i\in \{0,p\},j=1,2,...,k-1$}\\
x(u),& \textrm{ otherwise}
\end{array} \right..
\end{displaymath}
Then
 \begin{eqnarray*}
&&y^TA(G')y-x^TA(G)x\\
&=&y^TA(G')y-y^TA(G)y+y^TA(G)y-x^TA(G)x\\
&=&2[y(v_{01})y(v_{p+1,1})-y(v_{pk})y(v_{p+1,1})]+y^TA(G)y-x^TA(G)x\\
&=&2[x(v_{pk})x(v_{p+1,1})-x(v_{01})x(v_{p+1,1})]+y^TA(G)y-x^TA(G)x.
\end{eqnarray*}
By the definition of $ y $, it implies that $y(u)y(v) =x(u)x(v)$, if $uv\notin E_1''$. Then
\begin{eqnarray*}
&&y^TA(G)y-x^TA(G)x\\
&=&2\sum_{uv\in E_1''}[y(u)y(v)-x(u)x(v)]+2\sum_{uv\notin E_1''}[y(u)y(v)-x(u)x(v)]\\
&=&2[y(v_{pk})y(v_{p+1,1})-x(v_{pk})x(v_{p+1,1})]+2\sum_{j=1}^{k-1}\sum_{w\sim v_{0j},w\in V_{01}''}[y(v_{0j})y(w)-x(v_{0j})x(w)]\\
&+&\sum_{w\in V_{0k}''}2[y(v_{0k})y(w)-x(v_{0k})x(w)]+\sum_{1\le i<j\le k-1}2[y(v_{0i})y(v_{0j})-x(v_{0i})x(v_{0j})]\\
&+&\sum_{i\in \{2,3,..,k\}, j\notin \{2,3,..,k\}}2[y(v_{pi})y(v_{pj})-x(v_{pi})x(v_{pj})]\\
&+&\sum_{i,j\in \{2,3,..,k\},i< j}2[y(v_{pi})y(v_{pj})-x(v_{pi})x(v_{pj})]\\
&=&2[x(v_{01})-x(v_{pk})]x(v_{p+1,1})+2\sum_{j=2}^{k-1}\sum_{w\sim v_{0j},w\in V_{01}''}[y(v_{0j})y(w)-x(v_{0j})x(w)]\\
&+&\sum_{w\in V_{0k}''}2[x(v_{0k})x(w)-x(v_{0k})x(w)]+\sum_{2\le i<j\le k-1}2[x(v_{pi})x(v_{pj})-x(v_{0i})x(v_{0j})]\\
&+&\sum_{i=2}^{k-1}2[y(v_{pi})y(v_{p1})-x(v_{pi})x(v_{p1})]+\sum_{2\le i< j\le k-1}2[x(v_{0i})x(v_{0j})-x(v_{pi})x(v_{pj})]\\
&+&\sum_{w\in N_G(v_{01})}2[y(v_{01})y(w)-x(v_{01})x(w)]\\
&+&\sum_{w\in N_G(v_{pk})\backslash\{v_{p+1,1}\}}2[y(v_{pk})y(w)-x(v_{pk})x(w)]\\
&=&2[x(v_{01})-x(v_{pk})]x(v_{p+1,1})+2\sum_{i=2}^{k-1}\sum_{w\sim v_{0j},w\in V_{01}'}[x(v_{pi})x(w)-x(v_{0i})x(w)]\\
&+&\sum_{i=2}^{k-1}2[y(v_{pi})y(v_{p1})-x(v_{pi})x(v_{p1})]+\sum_{i=2}^{k-1}2[y(v_{01})y(v_{0i})-x(v_{01})x(v_{0i})]\\
&+&\sum_{i=2}^{k-1}2[y(v_{pk})y(v_{pi})-x(v_{pk})x(v_{pi})]\\
&+&\sum_{w\in N_G(v_{01})\backslash \{v_{02},...,v_{0,k-1}\}}2[y(v_{01})y(w)-x(v_{01})x(w)]+2[y(v_{pk})y(v_{p1})-x(v_{pk})x(v_{p1})]\\
&=&2[x(v_{01})-x(v_{pk})]x(v_{p+1,1})+2\sum_{i=2}^{k-1}\sum_{w\sim v_{0j},w\in V_{01}'}[x(v_{pi})-x(v_{0i})]x(w)\\
&+&\sum_{i=2}^{k-1}2[x(v_{0i})x(v_{p1})-x(v_{pi})x(v_{p1})]+\sum_{i=2}^{k-1}2[x(v_{pk})x(v_{pi})-x(v_{01})x(v_{0i})]\\
&+&\sum_{i=2}^{k-1}2[x(v_{01})x(v_{0i})-x(v_{pk})x(v_{pi})]\\
&+&\sum_{w\in N_G(v_{01})\backslash \{v_{02},...,v_{0,k-1}\}}2[x(v_{pk})x(w)-x(v_{01})x(w)]+2[y(v_{pk})y(v_{p1})-x(v_{pk})x(v_{p1})]\\
&=&2[x(v_{01})-x(v_{pk})]x(v_{p+1,1})+2\sum_{i=2}^{k-1}\sum_{w\sim v_{0j},w\in V_{01}'}[x(v_{pi})-x(v_{0i})]x(w)\\
&+&\sum_{i=2}^{k-1}2[x(v_{0i})-x(v_{pi})]x(v_{p1})+\sum_{w\in N_G(v_{01})\backslash \{v_{02},...,v_{0k}\}}2[x(v_{pk})-x(v_{01})]x(w)\\
&+&2[x(v_{pk})-x(v_{01})]x(v_{0k})+2[x(v_{01})-x(v_{pk})]x(v_{p1})
\end{eqnarray*}
Since $x(v_{p1})\le x(v_{0k}),x(v_{pk})>x(v_{01})$, $x(v_{0,k-1})< x(v_{p2})$ and $H\neq K_k$, then
\begin{eqnarray*}
&&y^TA(G)y-x^TA(G)x\\
&\ge&2[x(v_{01})-x(v_{pk})]x(v_{p+1,1})+2\sum_{i=2}^{k-1}\sum_{w\sim v_{0j},w\in V_{01}'}[x(v_{pi})-x(v_{0i})]x(w)\\
&+&\sum_{i=2}^{k-1}2[x(v_{0i})-x(v_{pi})]x(v_{p1})+2[x(v_{pk})-x(v_{01})][x(v_{0k})-x(v_{p1})]\\
&\ge&2[x(v_{01})-x(v_{pk})]x(v_{p+1,1})+\sum_{i=2}^{k-1}2[x(v_{pi})-x(v_{0i})][x(v_{0k})-x(v_{p1})]\\
&+&2[x(v_{pk})-x(v_{01})][x(v_{0k})-x(v_{p1})]\\
&\ge&2[x(v_{01})-x(v_{pk})]x(v_{p+1,1}),
\end{eqnarray*}
thus
 \begin{eqnarray*}
&&y^TA(G')y-x^TA(G)x\\
&=&2[y(v_{01})y(v_{p+1,1})-y(v_{pk})y(v_{p+1,1})]+y^TA(G)y-x^TA(G)x\\
&\ge&2[x(v_{pk})x(v_{p+1,1})-x(v_{01})x(v_{p+1,1})]+2[x(v_{01})-x(v_{pk})]x(v_{p+1,1})\\
&\ge& 0.
\end{eqnarray*}
So $\lambda(G')\ge y^TA(G')y\ge x^TA(G)x=\lambda(G)$ with equality if and only if $y$ is the Perron vector of $A(G')$. However,
\begin{eqnarray*}
(A(G')y)_{v_{p1}}&=&\sum_{j=2,...,k}y(v_{pj})=y(v_{pk})+\sum_{j=2,..,k-1}y(v_{pj})\\
&=&x(v_{01})+\sum_{j=2,..,k-1}x(v_{0j})<x(v_{pk})+\sum_{j=2,..,k-1}x(v_{pj})\\
&=&\lambda(G)x(v_{p1})=\lambda(G)y(v_{p1}).
\end{eqnarray*}
So $y$ is not the Perron vector of $A(G')$, thus $\lambda(G')>\lambda(G)$.

{\bf Subcase 2.3.2:} If $x(v_{0k})< x(v_{p1})$, then let $y$ be a vector defined by the following:
\begin{displaymath}
y(u) = \left\{ \begin{array}{ll}
x(v_{p-i,k+1-j}), & \textrm{ $u=v_{ij},i\in \{0,1,...,p\},j\in\{1,k\}$}\\
x(v_{p-i,j}), & \textrm{ $u=v_{ij},i=0,1,2,...,p;j=2,3,...,k-1$}\\
x(w),& \textrm{ otherwise}
\end{array} \right..
\end{displaymath}
Then
 \begin{eqnarray*}
&&y^TA(G')y-x^TA(G)x\\
&=&y^TA(G')y-y^TA(G)y+y^TA(G)y-x^TA(G)x\\
&=&2[y(v_{01})y(v_{p+1,1})-y(v_{pk})y(v_{p+1,1})]+y^TA(G)y-x^TA(G)x\\
&=&2[x(v_{pk})-x(v_{01})]x(v_{p+1,1})+y^TA(G)y-x^TA(G)x.
\end{eqnarray*}
For $uv\notin E_1''\cup E_2''$, $y(u)y(v)=x(u)x(v)$, by similar calculation in the {\bf Subcase 2.1.2}, then we can find that $2\sum_{uv\in E_2'}[y(u)y(v)-x(u)x(v)]=0$. Since $E_1''=\{v_{pk}v_{p+1,1}\}\cup E(G[V_1])\cup E(G[V_p])\cup \{v_{0i}w\in E(G)|w\notin \{v_{11},v_{01},...,v_{0k}\},i=1,..,k\}$. Let $W=\{v_{0i}w\in E(G)|w\notin \{v_{11},v_{01},...,v_{0k}\},i=1,..,k\}$. Thus
 \begin{eqnarray*}
&&y^TA(G)y-x^TA(G)x\\
&=&2\sum_{uv\in E_1'' \cup E_2''}[y(u)y(v)-x(u)x(v)]+2\sum_{uv\notin E_1'' \cup E_2''}[y(u)y(v)-x(u)x(v)]\\
&=&2[y(v_{pk})y(v_{p+1,1})-x(v_{pk})x(v_{p+1,1})]+2\sum_{i=1}^{k-1}\sum_{j=i+1}^k[y(v_{0i})y(v_{0j})-x(v_{0i})x(v_{0j})]\\
&+&2\sum_{i=1}^{k-1}\sum_{j=i+1}^k[y(v_{pi})y(v_{pj})-x(v_{pi})x(v_{pj})]+2\sum_{i=1}^{k}\sum_{w\sim v_{0i},w\in W}^k[y(v_{0i})y(w)-x(v_{0i})x(w)]\\
&=&2[x(v_{01})x(v_{p+1,1})-x(v_{pk})x(v_{p+1,1})]+2\sum_{1<i<j<k}[y(v_{0i})y(v_{0j})-x(v_{0i})x(v_{0j})]\\
&+&2\sum_{1<i<j<k}[y(v_{pi})y(v_{pj})-x(v_{pi})x(v_{pj})]+2\sum_{i=1}^{k}\sum_{w\sim v_{0i},w\in W}^k[x(v_{pi})x(w)-x(v_{0i})x(w)]\\
&+&2\sum_{1=i<j<k}[y(v_{01})y(v_{0j})-x(v_{01})x(v_{0j})]+2\sum_{1<i<j=k}[y(v_{0i})y(v_{0k})-x(v_{0i})x(v_{0k})]\\
&+&2\sum_{1=i<j<k}[y(v_{p1})y(v_{pj})-x(v_{p1})x(v_{pj})]+2\sum_{1<i<j=k}[y(v_{pi})y(v_{pk})-x(v_{pi})x(v_{pk})]\\
&=&2[x(v_{01})-x(v_{pk})]x(v_{p+1,1})+2\sum_{1<i<j<k}[x(v_{pi})x(v_{pj})-x(v_{0i})x(v_{0j})]\\
&+&2\sum_{1<i<j<k}[x(v_{0i})x(v_{0j})-x(v_{pi})x(v_{pj})]+2\sum_{i=1}^{k}\sum_{w\sim v_{0i},w\in W}^k[x(v_{pi})x(w)-x(v_{0i})x(w)]\\
&+&2\sum_{1=i<j<k}[x(v_{pk})x(v_{pj})-x(v_{01})x(v_{0j})]+2\sum_{1<i<j=k}[x(v_{pi})y(v_{p1})-x(v_{0i})x(v_{0k})]\\
&+&2\sum_{1=i<j<k}[y(v_{0k})x(v_{0j})-x(v_{p1})x(v_{pj})]+2\sum_{1<i<j=k}[x(v_{0i})x(v_{01})-x(v_{pi})x(v_{pk})]\\
&=&2[x(v_{01})-x(v_{pk})]x(v_{p+1,1})+2\sum_{i=1}^{k}\sum_{w\sim v_{0i},w\in W}^k[x(v_{pi})x(w)-x(v_{0i})x(w)].
\end{eqnarray*}
Since $H\neq K_k$, $x(v_{0k})< x(v_{p1})$, $x(v_{01})< x(v_{pk})$ and $x(v_{0,k-1})< x(v_{p2})$, then

$$y^TA(G)y-x^TA(G)x>2[x(v_{01})-x(v_{pk})]x(v_{p+1,1})$$.
Thus
\begin{eqnarray*}
&&y^TA(G')y-x^TA(G)x\\
&>&2[x(v_{pk})-x(v_{01})]x(v_{p+1,1})+2[x(v_{01})-x(v_{pk})]x(v_{p+1,1})=0.
\end{eqnarray*}
So $\lambda(G')\ge y^TA(G')y> x^TA(G)x=\lambda(G)$. This completes the proof.
\end{proof}

   Now we are ready to prove the Theorem ~\ref{T2}.

{\textbf {Proof.} } By Theorem \ref{Z}, $G\in \mathcal{T_{n,\alpha}}$. Next considering the following cases to prove the assertion:\\
{\bf Case 1:} If there are two vertices $u$ and $v$ each of which has at least two pendent clique paths adjacent with. Suppose that $u$ is adjacent with pendent clique paths $P_1$, $P_2$ and $v$ is adjacent with pendent clique paths $P_3$, $P_4$. Let $l_1$, $l_2$, $l_3$, $l_4$ be the lengths of the pendent clique paths $P_1$, $P_2$, $P_3$, $P_4$, respectively. Without loss of generality, let $l_1\geq l_3\geq l_4$. Then deleting the edge incident with $P_4$ and $u$, and adding it to the end of $P_3$ to get a new graph $G'$, By Lemma ~\ref{L2}, $\lambda(G)>\lambda(G')$, a contradiction with $\lambda(G)=\lambda_{n,\alpha}$.\\
{\bf Case 2:} If there is a vertex $u$ which has at least two pendent clique paths adjacent with, and there is not another vertex which has at least two pendent clique paths adjacent with. Suppose that $u$ is adjacent with pendent clique paths $P_1$, $P_2$,..., $P_t$. Assume $u$ is in the clique $G_1$ the size of which is $k$. By Lemma \ref{L6}, the degree of $V(G_1)\backslash \{u\}$ is $k-1$, suppose $v\in V(G_1)\backslash \{u\}$. Then delete some edge $uw$ which is not in $G_1$ and add edge $vw$ to get a new graph $G'$, and it is easy to find that $G'\in \mathcal{G}(n,\alpha)$. By Lemma \ref{L5}, $\lambda(G)>\lambda(G')$, Which contradicts with $\lambda(G)=\lambda_{n,\alpha}$. \\
{\bf Case 3:} If there is not a vertex which has at least two pendent clique paths adjacent with. By Lemma \ref{L6}, $G$ must be a clique path. \\
By the {\bf Case 1}, {\bf Case 2} and {\bf Case 3}, it can be found that the assertion holds.

\section{Proof of Theorem ~\ref{T3}}
\begin{proof}
  Let $G\in \mathcal{G}(n,\alpha)$, let $x$ be the Perron vector of $G$, then we consider the following cases:\\
{\bf Case 1:} There is a clique $G_1$ of order $k$ in $G$ which has two vertices $u$ and $v$ whose degrees are both larger than $k$. Without loss of generality, let $x(u)\geq x(v)$. Then deleting the edges incident with $w_{k}$ not in $G_1$ and adding them to $u$ to get a new graph $G'$, then $\lambda(G')\geq x^TA(G')x\geq x^TA(G)x=\lambda(G)$, by Rayleigh quotient principle, with equality holding if and only if $x$ is the eigenvector of $A(G')$. It is easy to find that $x$ is not the eigenvector of $A(G')$, so $\lambda(G')>\lambda(G)$.\\
{\bf Case 2:} For each clique $G_1$ of order $k$ in $G$, there is only one vertex in $G_1$ whose degree is larger than $k-1$. For any graph $H$, let $E_1(H)=\{e=uv\in E(H)|d(u)>k,d(v)>k\}$ and $N(G)=|E_1(H)|$. Let $uv\in E_1(G)$, without loss of generality, suppose $x(u)\geq x(v)$  and $vw_{k},vw_1,...,vv_t$ are all the edges which are not in any clique of order $k$. Then deleting the edges $vw_{k},vw_1,...,vv_t$ and adding edges $uw_{k},uw_1,...,uv_t$ to get a new graph $G'$, obviously $G'\in \mathcal{G}(n,\alpha)$, and $\lambda(G')\geq x^TA(G')x^T\geq x^TA(G)x^T=\lambda(G)$ with equality holding if and only if $x$ is an eigenvector of $A(G')$. It is easy to find that $x$ is not the eigenvector of $A(G')$, so $\lambda(G')>\lambda(G)$ and $N(G')<N(G)$.\\
Since $G\in \mathcal{G}(n,\alpha)$, then by {\bf Case 1} and {\bf Case 2}, it is easy to find that $\lambda(G)\leq \Lambda_{n,\alpha}(S(n,\alpha))$ with equality holding if and only if $G=S(n,\alpha)$.
\end{proof}

\end{document}